\documentclass[12pt]{amsart}

\usepackage{amsmath,amssymb,latexsym}
\usepackage{footnote}
\usepackage{fullpage}
\usepackage{etoolbox}
\usepackage{enumitem}
\usepackage[table]{xcolor}
\usepackage{color}
\usepackage[colorlinks,linkcolor=blue,citecolor=magenta, pagebackref]{hyperref}
\usepackage{soul}
\usepackage{microtype}
\usepackage{yhmath}
\usepackage{graphicx}
\usepackage{stackrel}
\usepackage{mathtools}
\usepackage{array}
\usepackage{float}
\usepackage{complexity}
\usepackage[sharp]{easylist}
\usepackage{tikz}
\usepackage[normalem]{ulem}
\usepackage[margin=1in]{geometry}
\usepackage{subcaption}
\usetikzlibrary{arrows.meta}

\usepackage[T1]{fontenc} 

\definecolor{FMcolor}{HTML}{DBE605}
\definecolor{SHcolor}{HTML}{50BCDF}



%
\usepackage[colorinlistoftodos,bordercolor=orange,backgroundcolor=orange!20,linecolor=orange,textsize=scriptsize]{todonotes}
\marginparwidth=0.75in

\newtheorem{theorem}{Theorem}[section]
\newtheorem{proposition}[theorem]{Proposition}

\newtheorem{lemma}[theorem]{Lemma}
\newtheorem*{lemma-nonumber}{Lemma}

\theoremstyle{definition}

\newtheorem{question}{Open Question}

\def\R{\mathbb{R}}
\def\Z{\mathbb{Z}}

\def\K{\mathsf{K}}
\def\L{\mathsf{L}}
\def\LL{\mathcal{L}}
\def\VV{\mathcal{V}}

\def\M{\mathcal{M}}

\def\sd{\operatorname{sd}}

\def\height{\operatorname{height}}
\def\st{\operatorname{st}}

\renewcommand{\leq}{\leqslant}
\renewcommand{\preceq}{\preccurlyeq}
\renewcommand{\succeq}{\succcurlyeq}

\newcommand{\under}[1]{\underline{#1}}

\newcommand{\rk}{\operatorname{rk}}

\usepackage{comment}

\title{Colorful circuits and colorful topes \\ in oriented matroids}

\author{Minho Cho}
\address{Minho Cho, School of Computational Sciences, Korea Institute for Advanced Study(KIAS), Seoul, South Korea.}
\email{minhocho.math@gmail.com}

\thanks{The first author was supported by a KIAS Individual Grant (CG102101) at Korea Institute for Advanced Study.}

\author{Seunghun Lee}
\address{Seunghun Lee, Department of Mathematics, Keimyung University, Daegu, South Korea.}
\email{seunghun.math@gmail.com}

 \thanks{The second author was supported by the Institute for Basic Science (IBS-R029-C1) and the Department of Mathematical Sciences of Korea Advanced Institute of Science and Technology (BK21).}

\author{Fr\'ed\'eric Meunier}
\address{Fr\'ed\'eric Meunier, CERMICS, ENPC, Institut Polytechnique de Paris, Marne-la-Vallée, France.}
\email{frederic.meunier@enpc.fr}

\begin{document}

\begin{abstract}
We provide a short proof of a conic version of the colorful Carathéodory theorem for oriented matroids.
Holmsen’s extension of the colorful Carathéodory theorem to oriented matroids (Advances in Mathematics, 2016) already encompasses several generalizations of the original result, but not its conic version.
Our approach relies on a common generalization of Sperner's lemma and Meshulam's lemma—two closely related results from combinatorial topology that have found a number of applications in discrete geometry and combinatorics. This generalization may be of independent interest.

Using a similar approach, we also establish the following colorful theorem for topes, whose special geometric case had not been considered before: {\em Given $n$ topes from a uniform oriented matroid with $n$ elements, if they agree on some element, then there is a way to select a distinct element from each tope, together with its sign, so as to form another tope of the oriented matroid.} Motivated by this theorem, we further explore other conditions leading to the same conclusion.
\end{abstract}

\keywords{Oriented matroids; colorful Carathéodory theorem; Sperner lemma; topes}

\subjclass[2020]{05B35, 52C40}

\maketitle

\section{Introduction}
One of the most beautiful theorems in discrete geometry is the colorful Carathéodory theorem by B\'ar\'any~\cite{barany1982generalization}. It states: {\em Given $d+1$ sets of points in $\R^d$, each of them containing the origin in its convex hull, there exists a way to select one point from each set so that the resulting set still contains the origin in its convex hull.} A generalization for oriented matroids has been proved by Holmsen~\cite{holmsen2016intersection} via a generalization of the topological colorful Helly theorem by Kalai and Meshulam~\cite{topological_helly}.
Actually, the result by Holmsen (as well as that by Kalai and Meshulam) is a generalization of a much stronger version of the colorful Carathéodory theorem, where the colors are replaced by some (non-oriented) matroid constraint, and where the condition is less demanding. Yet, there exists a ``conic'' version of the colorful Carathéodory theorem: {\em Given $d+1$ sets of vectors in $\R^{d+1}$, and a vector $p$ in $\R^{d+1}$ contained in the conic hull of each of these sets, there exists a way to select one vector from each set so that the resulting set still contains $p$ in its conic hull.} This conic version is also stronger than the original convex version.

A first contribution of this paper is a generalization of the conic version for oriented matroids, something which was absent from the literature. Its proof---given in Section~\ref{subsec:color-carac}---relies on a topological lemma, which turns out to be a simultaneous generalization of the Sperner lemma~\cite{sperner1928satz} and the Meshulam lemma~\cite{meshulam2001clique}, both lemmas which have found many applications in topological combinatorics, especially to get ``colorful'' combinatorial results (e.g., \cite{aharoni2007independent, Meshulam_domination}), and beyond~\cite{de2019discrete}. Such a generalization is new, and may be of independent interest. It is stated and proved in Section~\ref{subsec:sperner-meshulam}.

\begin{theorem}\label{thm:color-carac}
    Consider an oriented matroid of rank $r$. For every collection of $r$ positive circuits $C_1,\dots,C_r$ that share a common element $e$, there is a way to select at most one element from each $C_i\setminus \{e\}$ so as to form with $e$ a positive circuit.
\end{theorem}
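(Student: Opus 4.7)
The plan is to invoke the generalized Sperner--Meshulam lemma of Section~\ref{subsec:sperner-meshulam} after translating the hypotheses into a topological covering problem. I would first apply the Folkman--Lawrence topological representation theorem to the rank-$r$ oriented matroid, obtaining a signed pseudosphere arrangement on $S^{r-1}$ in which each element $f$ corresponds to a pseudosphere $S_f$ with closed positive and negative sides $\overline{S_f^+}$, $\overline{S_f^-}$. The hypothesis that each $C_i$ is a positive circuit containing $e$ translates to $\bigcap_{f \in C_i} \overline{S_f^+} = \emptyset$; restricting attention to the closed pseudohemisphere $\overline{S_e^+}$---a topological $(r-1)$-ball---this says that the closed sets $\{\overline{S_f^-} \cap \overline{S_e^+} : f \in C_i \setminus \{e\}\}$ form a cover of $\overline{S_e^+}$ for every $i \in \{1,\dots,r\}$.

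Next, I would fix a triangulation $T$ of $\overline{S_e^+}$ refining the cellular structure of the arrangement, and label each vertex $v$ of $T$ by the set of pairs $(i,f)$ with $f \in C_i \setminus \{e\}$ and $v \in \overline{S_f^-}$. The sign vector is constant on each open cell of the arrangement, so the labels vary in a controlled manner as one moves between cells; combined with the covering property above, this should place the labeling within the scope of the generalized lemma. Applying the lemma produces a simplex $\sigma$ of $T$ together with a selection of pairs $(i_1,f_1),\dots,(i_k,f_k)$ on distinct colors such that $\bigcap_{j=1}^{k} \overline{S_{f_j}^-} \cap \overline{S_e^+} = \emptyset$. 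Dualizing, this emptiness is the oriented-matroid Farkas condition guaranteeing a positive vector supported on $\{e, f_1, \dots, f_k\}$ with nonzero $e$-coefficient; since every positive vector decomposes as a sum of positive circuits, one such positive circuit contains $e$ and has support inside $\{e, f_1, \dots, f_k\}$, yielding the desired conclusion.

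The central obstacle will be matching the labeling of the triangulation precisely against the hypotheses of the generalized Sperner--Meshulam lemma---one must check that the face-by-face compatibility conditions are met, with particular care at the points where several pseudospheres meet and more than one color class contributes labels. Extracting the minimal positive circuit and hence the ``at most one element from each $C_i$'' conclusion (rather than ``exactly one'') should then fall out of the circuit-elimination machinery, but needs verification to ensure that the minimal support genuinely contains $e$.
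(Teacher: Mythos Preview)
Your plan has two concrete gaps.

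First, the Farkas step points the wrong way. Emptiness of $\bigcap_j \overline{S_{f_j}^-} \cap \overline{S_e^+}$ says that no nonzero covector $Y$ satisfies $Y(e)\in\{0,+\}$ and $Y(f_j)\in\{0,-\}$ for all $j$. In the realizable picture this forces $v_e \in \operatorname{int}\operatorname{cone}\{v_{f_j}\}$, which yields a dependence with sign $+$ on $e$ and $-$ on the $f_j$'s---not a positive vector. A sanity check in rank one makes this vivid: if $\{e,f\}$ is a positive circuit then $\overline{S_f^-}\cap\overline{S_e^+}=\overline{S_e^+}\neq\varnothing$, so your emptiness condition can never be met even though the theorem is trivially true there. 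The correct Farkas translation of ``positive circuit through $e$ inside $\{e,f_1,\dots,f_k\}$'' is emptiness of $\bigcap_j \overline{S_{f_j}^{+}}\cap S_e^{+}$, but then the covering you set up is no longer the relevant one.

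Second, the passage from Lemma~\ref{lem:sperner-meshulam} to an ``empty intersection'' is unjustified. The lemma outputs a rainbow \emph{simplex}; in Sperner/KKM arguments this corresponds to a \emph{common point} of the selected sets, which is the opposite of what you claim. Moreover, you never specify the family $\{\K^I\}$ or the single-valued labeling $\lambda$ that the lemma requires, and since each of your $r$ covers already covers all of $\overline{S_e^+}$, every vertex of the triangulation carries every color and the natural choices collapse to triviality. The paper sidesteps both issues by working on the dual side (topological representation of the dual matroid): the $\K^I$ are order complexes of certain positive vectors with connectivity supplied by Lemma~\ref{lem:HH}, the label of a vector records which $C_i$ contributes the most elements to its support, and the rainbow chain is converted into the desired circuit by a counting argument using Proposition~\ref{prop:height}---no Farkas step is needed.
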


We emphasize that a $C_i$ may share several elements with the resulting positive circuit. A very formal statement could finish by ``. . . {\em there exist a positive circuit $\widetilde C$ containing $e$ and an injective map $\pi\colon \widetilde C\setminus\{e\}\to[r]$ such that $f \in C_{\pi(f)}$ for all $f \in \widetilde C\setminus\{e\}$.}''

A generalization of the convex colorful Carathéodory theorem for oriented matroids can easily be obtained from Theorem~\ref{thm:color-carac} by a lifting argument. For sake of completeness, we state and prove this generalization. (Note that it is actually a special case of Holmsen's theorem.)

\begin{theorem}\label{thm:color_carac_convex}
    Consider an oriented matroid of rank $r-1$. For every collection of $r$ positive circuits $C_1,\dots,C_r$, there is a way to select at most one element from each $C_i$ so as to form a positive circuit.
\end{theorem}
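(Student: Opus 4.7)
The plan is to derive Theorem~\ref{thm:color_carac_convex} from Theorem~\ref{thm:color-carac} by a lifting argument, mimicking the classical deduction of the convex Carathéodory theorem from its conic counterpart. Given an oriented matroid $\M$ of rank $r-1$ on ground set $E$ together with $r$ positive circuits $C_1,\dots,C_r$, I introduce a new element $e\notin E$ and build an oriented matroid $\M'$ of rank $r$ on $E\cup\{e\}$ whose positive circuits containing $e$ are exactly the sets of the form $C\cup\{e\}$, with $C$ ranging over the positive circuits of $\M$, and moreover $\M'/e=\M$.

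This $\M'$ is the oriented-matroid analogue of the realizable lift that sends $v\in\R^{r-1}$ to $(v,-1)\in\R^{r}$ and adjoins $e:=(0,\dots,0,1)$. In the realizable case the correspondence is immediate: every positive dependence $\sum_i\lambda_iv_i=0$ with $\lambda_i>0$ yields $\sum_i\lambda_i(v_i,-1)+\bigl(\sum_i\lambda_i\bigr)e=0$, in which all coefficients are positive, so $C\cup\{e\}$ is a positive circuit of $\M'$; conversely every positive circuit of $\M'$ involving $e$ arises this way, while no positive circuit of $\M'$ avoids $e$. For an abstract oriented matroid $\M$, the required $\M'$ can be obtained from a standard one-element coextension, either by specifying its signed circuits and verifying the oriented-matroid circuit axioms directly, or by invoking the topological representation theorem and coning.

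Once $\M'$ is in hand, the $r$ positive circuits $C_1\cup\{e\},\dots,C_r\cup\{e\}$ of $\M'$ share the common element $e$, so Theorem~\ref{thm:color-carac} applies and produces a positive circuit $\widetilde C'$ of $\M'$ containing $e$, formed from $e$ together with at most one element selected from each $C_i$. By the correspondence above, $\widetilde C:=\widetilde C'\setminus\{e\}$ is a positive circuit of $\M$ using at most one element from each $C_i$, which is the conclusion of Theorem~\ref{thm:color_carac_convex}. The main technical point will be to pin down the construction of $\M'$ rigorously for an abstract oriented matroid and to verify the bijective correspondence between positive circuits of $\M$ and positive circuits of $\M'$ containing $e$; in the realizable setting this is elementary, but the abstract version requires a brief appeal to a standard oriented-matroid extension.
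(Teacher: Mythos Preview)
Your approach is essentially the same as the paper's: both derive Theorem~\ref{thm:color_carac_convex} from Theorem~\ref{thm:color-carac} by constructing a one-element lifting $\M'$ of $\M$ in which each positive circuit $C$ of $\M$ lifts to a positive circuit $C\cup\{e\}$, applying the conic version, and then contracting $e$. The only difference is in how the lifting is pinned down: where you sketch the realizable picture and gesture at a direct verification or a coning argument, the paper invokes a specific result of Las Vergnas (Proposition~7.2.4 in \cite{om_book}) on localizations of the dual, with the all-$+$ sign vector, to guarantee that such a lifting exists for an arbitrary oriented matroid.
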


\begin{proof}[Proof (based on Theorem~\ref{thm:color-carac})]
    Let $\sigma$ be a localization of the dual oriented matroid. We choose $\sigma$ in such a way that $\sigma(C)=+$
 for every positive circuit of the original oriented matroid. This is possible thank to a result by Las Vergnas~\cite[Proposition 7.2.4]{om_book} (applied with all elements of the ground set; this result requires to specify the signs taken by the localization on those elements via a vector ``$\alpha$,'' which we set to the all-$+$ vector). Apply then Theorem~\ref{thm:color-carac} to the circuits $(C_i,\sigma(C_i))$, with $e$ being the new element. This shows that there is a way to select at most one element from each $C_i$ so as to form with $e$ a positive circuit of the lifted oriented matroid. Contracting $e$ provides the desired positive circuit of the original oriented matroid.
 \end{proof}

A second contribution is a result with a similar flavor. Yet, as far as we know, even the special case when the oriented matroid is realizable is new. Its proof---given in Section~\ref{subsec:n_and_r}---also relies on the Sperner--Meshulam-type lemma of Section~\ref{subsec:sperner-meshulam}. 

Note that we are moving to the dual oriented matroid (topes are covectors while circuits are vectors). We chose to state (and prove) the theorems in this way to keep the statements as concise and natural as possible.

\begin{theorem}\label{thm:n+1}
    Consider a uniform oriented matroid with $n$ elements. For every collection of $n$ topes $T_1,\dots,T_n$ that agree on some element, there exists a tope $\widetilde T$ agreeing with each $T_i$ on a distinct element.
\end{theorem}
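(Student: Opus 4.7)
The plan is to mirror the strategy behind Theorem~\ref{thm:color-carac}: encode the desired conclusion as the existence of a ``rainbow'' cell in a well-chosen simplicial model, and invoke the common generalization of Sperner's and Meshulam's lemmas from Section~\ref{subsec:sperner-meshulam}.

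After reorienting, I assume without loss of generality that $T_i(e)=+$ for every $i\in[n]$, and set $E':=E\setminus\{e\}$. The sought bijection $\pi\colon[n]\to E$ must send some index $i_0$ to $e$, and this contributes an automatically-satisfied constraint as soon as we restrict attention to topes $\widetilde T$ with $\widetilde T(e)=+$. So the theorem reduces to producing, for some $i_0\in[n]$, such a tope $\widetilde T$ together with an injection $[n]\setminus\{i_0\}\hookrightarrow E'$ realized by the agreement of the $T_i$'s with $\widetilde T$.

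I would encode this as a rainbow problem on the simplicial model $\Delta$ of the ``hemisphere'' $\{X : X(e)\in\{0,+\}\}$ of the pseudo-arrangement on $S^{r-1}$---a PL ball of dimension $r-1$ whose top-dimensional cells correspond bijectively to the topes $T$ with $T(e)=+$. To each pair $(f,s)$ with $f\in E'$ and $s\in\{+,-\}$ I attach the label set
$L(f,s):=\{i\in[n] : T_i(f)=s\}$,
recording which of the original topes agree with sign $s$ at $f$. Crucially, $L(f,+)\sqcup L(f,-)=[n]$ for every $f\in E'$. Producing the tope and bijection required by the theorem is then equivalent to locating a top cell of $\Delta$ whose vertex-labels admit a system of distinct representatives of size $n-1$; the unmatched color plays the role of $i_0=\pi^{-1}(e)$.

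The last step is to extract such a cell from the Sperner--Meshulam-type lemma of Section~\ref{subsec:sperner-meshulam}. The topological hypothesis holds since $\Delta$ is a PL ball, and the remaining work is to verify the lemma's labeling hypothesis---typically a compatibility condition along the boundary $\{X : X(e)=0\}$ of $\Delta$. This boundary check is the step I expect to be the main obstacle, and the strategy is to deduce it from the partition property $L(f,+)\sqcup L(f,-)=[n]$ together with the common sign $+$ of the $T_i$'s at $e$. Once the lemma fires, reading off the distinguished cell and a compatible SDR yields the tope $\widetilde T$ and the bijection $\pi$.
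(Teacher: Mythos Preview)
Your proposal has a genuine dimension mismatch that prevents the Sperner--Meshulam lemma from applying in the way you suggest. The hemisphere $\Delta = \{X : X(e)\in\{0,+\}\}$ is a PL ball of dimension $r-1$, where $r$ is the rank; in particular its top cells are $(r-1)$-dimensional. But Lemma~\ref{lem:sperner-meshulam} with $k=n$ produces an $(n-1)$-dimensional simplex of $\K^{[n]}$ with pairwise distinct labels in $[n]$. Since typically $r \ll n$, the complex $\Delta$ simply does not contain simplices of that dimension, and there is no evident way to build the required family $\{\K^I\}_{I\subseteq[n]}$ of $(|I|-2)$-connected subcomplexes out of $\Delta$. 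Relatedly, your labels $L(f,s)$ are attached to signed elements, not to vertices of $\Delta$; you never define the map $\lambda$ on vertices that the lemma demands, and it is unclear how the ``rainbow simplex'' conclusion would translate into the SDR you want.

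The paper circumvents this by working in a completely different ambient complex. For each $I\subseteq[n]$ it takes the simplotope $(\sigma^I)^n$ and keeps the faces whose vertices $(i_1,\ldots,i_n)\in I^n$ satisfy $(T_{i_1}(1),\ldots,T_{i_n}(n))\in\LL$; call this $\L^I$. The point is that $\L^I$ lives in an $n(|I|-1)$-dimensional polytope, so it has room for high-dimensional faces, and its barycentric $(|I|-1)$-skeleton can serve as $\K^I$. The connectivity needed by the lemma is supplied by showing (via a Quillen-type argument and the topological representation theorem) that $\L^I$ is homotopy equivalent to a ball precisely because the $T_i$ agree on $e$. The labeling $\lambda$ is then defined on faces $\tau=\tau_1\times\cdots\times\tau_n$ of the simplotope, not on cells of the pseudosphere arrangement. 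Your instinct to use the lemma is right, but the crucial missing idea is to move to this product model where the dimension matches $n$ rather than $r$.
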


We emphasize that, in general, the tope $\widetilde T$ does not necessarily belong to the collection of the $n$ topes.

While Theorems~\ref{thm:color-carac} and~\ref{thm:color_carac_convex} hold without any assumption on non-degeneracy, uniformity is required for Theorem~\ref{thm:n+1}, as shown by the following example. Consider the covectors of the matrix $\begin{psmallmatrix} 1 & 1 & 1 & 0 \\ 0 & 0 & 0 & 1 \end{psmallmatrix}$. They form a rank-two oriented matroid on four elements, with
\[
(+,+,+,+)\, , \quad (-,-,-,-) \, , \quad (+,+,+,-) \, , \quad (-,-,-,+)
\]
as topes. Though, there is no $\widetilde T$ as in the theorem for $T_1=T_2=(+,+,+,+)$ and $T_3=T_4=(-,-,-,+)$. (This example can immediately be generalized to get a counter-example of any rank.)

It is not clear that the condition in Theorem~\ref{thm:n+1} about element $n$ is necessary, but just removing this condition is not possible. Indeed, the (uniform) oriented matroid with $n$ elements and with only two cocircuits, $(+,+,\ldots,+)$ and $(-,-,\ldots,-)$, is a counter-example. However, finding other counter-examples does not look that easy and this leads us to formulate the following open question.

\begin{question} \label{conjecture_om}
  Given a collection of $n$ topes $T_1,\dots,T_n$ of a uniform oriented matroid on $n$ elements, under what conditions does there exist a tope $\widetilde T$ agreeing with each tope $T_i$ on a distinct element?
\end{question}

We did not succeed in answering this question in full generality. However, we were able to identify two sufficient conditions, in addition to that of Theorem~\ref{thm:n+1}, ensuring a positive answer to Question~\ref{conjecture_om}. One condition is that, among the $n$ topes, at most $r$ are distinct, where $r$ is the rank of the oriented matroid. It is stated as follows and the proof is given in Section~\ref{subsec:n_and_r} as well.

\begin{theorem}\label{thm_om_rank_r_general}
    Consider a uniform oriented matroid of rank $r$ and with $n$ elements. For every collection of $n$ topes $T_1,\dots,T_n$ among which at most $r$ are pairwise distinct, there exists a tope $\widetilde T$ agreeing with each $T_i$ on a distinct element.
\end{theorem}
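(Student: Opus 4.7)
Let $S_1, \dots, S_k$ denote the pairwise distinct topes among $T_1, \dots, T_n$, with respective multiplicities $m_1, \dots, m_k$; thus $k \le r$ and $\sum_{j=1}^{k} m_j = n$. My plan is to reduce Theorem~\ref{thm_om_rank_r_general} to Theorem~\ref{thm:n+1} via a single-element extension of the oriented matroid that exploits the hypothesis $k \le r$.

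The first step is to extend the uniform rank-$r$ oriented matroid by a new element $\star$ to a uniform rank-$r$ oriented matroid on $n+1$ elements, in such a way that each sign vector $(S_j, +)$ is a tope of the extension. The hypothesis $k \le r$ is crucial: it guarantees the existence of a compatible single-element extension, by a Las Vergnas--type argument similar to the one used in the proof of Theorem~\ref{thm:color_carac_convex}. Concretely, in the realizable case one selects linearly independent representatives $p_1, \dots, p_k$ in the tope cones of $S_1, \dots, S_k$ (possible since $k \le r$ and the tope cones are full-dimensional open sets), and then Farkas's lemma furnishes a direction $v_\star$ with $v_\star \cdot p_j > 0$ for every $j$; this $v_\star$ realizes the desired extension.

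Second, I consider the $n+1$ topes $(T_1, +), \dots, (T_n, +), (S_1, +)$ of the extended oriented matroid, where the last is a duplicated copy of $S_1$. They all agree on $\star$, so Theorem~\ref{thm:n+1} applied to the extension yields a tope $\widehat T$ and a bijection $\pi$ matching these $n+1$ topes to $[n+1]$ under the agreement condition. Setting $\widetilde T := \widehat T|_{[n]}$ defines a tope of the original oriented matroid. If the tope matched by $\pi$ to $\star$ is the duplicated $(S_1, +)$, then the restriction of $\pi$ to the remaining topes directly supplies the required matching of $T_1, \dots, T_n$ with distinct elements of $[n]$.

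The main obstacle is the remaining case, where some $(T_{i^*}, +)$ with $i^* \le n$ is matched to $\star$; then the restricted matching covers the multiset $\{T_i\}_{i \ne i^*} \cup \{S_1\}$ rather than $\{T_1, \dots, T_n\}$. I would handle this via a Hall's-marriage / augmenting-path argument in the bipartite graph with $T_1, \dots, T_n$ on one side, $[n]$ on the other, and an edge $T_i \sim e$ whenever $\widetilde T(e) = T_i(e)$: starting from the now-unmatched $T_{i^*}$ and exploiting the slack provided by the duplicated $S_1$, one augments along alternating paths to complete the matching. As an alternative, one may iterate the reduction by varying the duplicated tope among $S_1, \dots, S_k$ and invoke a pigeonhole- or fixed-point-type argument to ensure that at least one such choice forces the tope matched to $\star$ to coincide with the duplicate, giving the conclusion directly.
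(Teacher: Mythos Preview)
Your reduction strategy is appealing but has two genuine gaps.

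First, the single-element extension you need in Step~1 is only argued in the realizable case. Single-element extensions of an oriented matroid are parametrized by localizations on its set of \emph{cocircuits}, and the particular extension you want---uniform, with every $(S_j,+)$ a tope---is not produced by any result cited in the paper. The Las~Vergnas localization invoked for Theorem~\ref{thm:color_carac_convex} operates on the dual: it adjoins an element so that prescribed \emph{circuits} keep a given sign, which is not the same as prescribing signs on $k$ chosen topes of the primal. You would need an independent oriented-matroid argument here, and you have not supplied one.

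Second, and more seriously, the ``main obstacle'' case in Step~3 is not actually resolved. Suppose Theorem~\ref{thm:n+1} returns $\widehat T$ and the bijection sends $(T_{i^\star},+)$ to $\star$. In your bipartite agreement graph between $\{T_1,\dots,T_n\}$ and $[n]$, the inherited matching covers all of $[n]$ and all $T_i$ except $T_{i^\star}$; the only unmatched vertices are $T_{i^\star}$ on one side and nothing on the other. An augmenting path would have to start with an edge out of $T_{i^\star}$, but nothing rules out $\widetilde T=-T_{i^\star}$, in which case $T_{i^\star}$ is isolated and no perfect matching for \emph{this} $\widetilde T$ exists. Your pigeonhole alternative (cycling the duplicate through $S_1,\dots,S_k$) does not escape this: each application of Theorem~\ref{thm:n+1} may return a different $\widehat T$, and there is no mechanism forcing the tope matched to $\star$ to coincide with the duplicate for some choice of~$\ell$.

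For comparison, the paper does not reduce Theorem~\ref{thm_om_rank_r_general} to Theorem~\ref{thm:n+1}. It runs the Sperner--Meshulam lemma (Lemma~\ref{lem:sperner-meshulam}) directly with $k=r$, on the same polytopal complexes $\mathsf L^I$ used for Theorem~\ref{thm:n+1}. The hypothesis ``at most $r$ distinct topes'' enters only through the size of the index set, and the connectivity input is the first assertion of Lemma~\ref{lem_connectivity_K^J} (that each $\mathsf L^I$ is $(r-2)$-connected), which does not require the topes to agree on any element. The labeling is tailored to the multiplicities $n_1,\dots,n_r$: a face $\tau=\tau_1\times\cdots\times\tau_n$ receives label $i$ whenever at least $n_i$ of the factors equal $\{i\}$. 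This produces the desired bijection in one shot and avoids entirely the matching-repair problem you run into.
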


An alternative and more formal way of stating this theorem could be: {\em Consider a uniform oriented matroid of rank $r$ on $[n]$. For every collection of topes $T_1,\ldots,T_r$ and positive integers $n_1, \ldots, n_r$ summing up to $n$, there exist a tope $\widetilde T$ and a partition $[n]=J_1\cup \cdots \cup J_r$ such that $|J_i| = n_i$ and $\widetilde T(j) = T_i(j)$ for all $i\in [r]$ and $j \in J_i$.} The original $n$ topes are obtained by considering $n_i$ copies of each $T_i$.

The rank of the oriented matroid being two is the second sufficient condition we have been able to identify. The proof---given in Section~\ref{sec:rank2}---relies on ``alternating words'' on the alphabet $\{+,-\}$.

\begin{theorem} \label{thm_om_rank_2}
    Consider a uniform oriented matroid of rank two on $n$ elements. For every collection of $n$ topes $T_1,\ldots,T_n$, there exists a tope $\widetilde T$ agreeing with each $T_i$ on a distinct element.
\end{theorem}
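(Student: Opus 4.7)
The plan is to reduce the statement to a combinatorial matching question on threshold sign vectors, then exploit cyclic structure on the tope cycle of length $2n$. A uniform rank-$2$ oriented matroid on $n$ elements is determined up to reorientation by the cyclic order in which its pseudo-lines are met on a bounding circle; choosing a labeling of $[n]$ compatible with this cyclic order, I may assume the $2n$ topes are the threshold vectors $T^{(k)}=(-)^{k}(+)^{n-k}$ for $0\le k\le n$ and $T^{(k)}=(+)^{k-n}(-)^{2n-k}$ for $n\le k<2n$, with $T^{(k+n)}=-T^{(k)}$ and consecutive $T^{(k)}$'s differing in one coordinate. Writing each $T_i=T^{(k_i)}$, I seek $k^*\in\{0,\dots,2n-1\}$ and a bijection $\pi\colon [n]\to[n]$ with $T^{(k^*)}(\pi(i))=T_i(\pi(i))$ for all $i$.

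\textbf{Reformulation via Hall.} A direct calculation shows that the agreement set $A_i(k^*):=\{e\in[n]:T^{(k^*)}(e)=T_i(e)\}$ is a cyclic interval of $[n]$ (for the cyclic order inherited from the labeling) of length $n-\ell_i$, where $\ell_i$ is the cyclic distance between $k^*$ and $k_i$ on the tope cycle. So the goal becomes to pick $k^*$ so that $\{A_1(k^*),\dots,A_n(k^*)\}$ admits a system of distinct representatives, after which Hall's marriage theorem yields $\pi$. Two structural features guide the search: (i) moving $k^*\mapsto k^*+1$ toggles the same single element (the pseudo-line just crossed) in every $A_i(k^*)$ at once, and (ii) antipodal values give complementary agreement sets, $A_i(k^*+n)=[n]\setminus A_i(k^*)$.

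\textbf{Alternating words and obstacle.} To encode Hall via alternating words, for each element $e$ I would record the $\{+,-\}$-word obtained by reading $T^{(k_i)}(e)$ in the cyclic order of the $k_i$'s around the tope cycle. Each such word has at most two sign changes, since the full sequence $(T^{(k)}(e))_{k=0}^{2n-1}$ along the tope cycle is $+^{n}-^{n}$ up to rotation; jointly across all $e$ these words control which elements of $[n]$ belong to $\bigcup_{i\in S}A_i(k^*)$ for each subset $S$. The main obstacle is then verifying Hall for cyclic intervals, which is subtler than its linear analogue: my plan is to split on whether the positions $k_1,\dots,k_n$ all lie in a half of the tope cycle (``clustered'' case, where cutting $[n]$ at an appropriate gap linearizes all $A_i$ and a classical interval-SDR argument applies) or not (``spread-out'' case, where the alternating-word analysis together with the toggle rule (i) and the antipodal symmetry (ii) is used to exhibit a $k^*$ satisfying cyclic Hall).
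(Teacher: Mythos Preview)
Your setup and Hall reformulation are correct: up to relabeling and reorientation, the topes of a uniform rank-$2$ oriented matroid on $[n]$ are precisely the $2n$ threshold vectors you describe, and for any candidate $\widetilde T = T^{(k^*)}$ the agreement set $A_i(k^*)$ is a cyclic interval of $[n]$ of length $n-\ell_i$. Up to this point your reduction coincides with the paper's reduction to binary words of alternation number at most two.

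The proposal, however, stops short of a proof at the decisive step. In the ``spread-out'' case you only announce that ``the alternating-word analysis together with the toggle rule~(i) and the antipodal symmetry~(ii)'' will exhibit a suitable $k^*$; no argument is given, and this is exactly where the content lies. Note that a bad choice of $k^*$ can make some $A_i(k^*)$ empty (take $k_i=k^*+n$), so existence of a good $k^*$ is genuinely the issue. Even the ``clustered'' case is not as routine as claimed: knowing that the $k_i$'s lie in a half-arc does not by itself tell you \emph{which} $k^*$ linearizes all the cyclic intervals simultaneously while still satisfying Hall, and the nested-interval structure only appears for very particular choices of $k^*$. You have correctly located the obstacle but not removed it.

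For comparison, the paper takes a rather different route after the common reduction. Instead of ranging over target topes $T^{(k^*)}$ and looking for an arbitrary SDR, it first sorts the given topes by an explicit total order $\preceq_b$ and then restricts attention to the $n$ ``diagonal'' bijections $\pi(i)=i-k$. The heart of the argument is a two-dimensional crossing lemma on a toroidal grid: the sign changes in the matrix $(w^j_i)$ are encoded as a closed directed walk with at most $2n$ horizontal arcs and exactly $n$ vertical arcs, and one shows that some diagonal cycle is crossed at most once, which forces the corresponding diagonal word to have alternation number at most two. This gives a stronger conclusion (some cyclic shift already works as $\pi$) via a topological counting argument that bypasses Hall's condition entirely.
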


Notice that the counter-example above, showing that extra conditions are indeed necessary to get a positive answer to Question~\ref{conjecture_om}, is of rank one. So, in the statement of Theorem~\ref{thm_om_rank_2}, it is not possible to replace ``of rank two'' by ``of rank at most two.''

\subsection*{Conventions} Throughout the paper, homology is always understood with coefficients in $\Z_2$. Moreover, we use standard notation. We denote by $\VV$ the set of vectors of an oriented matroid, by $\VV^+$ the set of its positive vectors, by $\LL$ the set of its covectors, and by $\LL^+$ the set of positive covectors. The usual partial order on vectors and covectors of oriented matroids is denoted by $\preceq$: given two signed vectors $X,Y$, we have $X \preceq Y$ if $X(j) \neq 0$ implies $X(j) = Y(j)$.

\subsection*{Acknowledgments} The authors thank Eran Nevo for helping them write the argument relying on Alexander's duality in the proof of Lemma~\ref{lem:HH}. They are also grateful to Laura Anderson for fruitful discussions about triangulations of pseudosphere representations and fundamental facts of oriented matroids, and to Andreas Holmsen for explanations regarding the status of the colorful Carathéodory theorem within oriented matroids. Last but not least, they thank Michael Gene Dobbins for helpful discussion and comments regarding Open Question \ref{conjecture_om}.

\section{Proofs of Theorems~\ref{thm:color-carac},~\ref{thm:n+1}, and~\ref{thm_om_rank_r_general}}

The proofs of Theorems~\ref{thm:color-carac},~\ref{thm:n+1}, and~\ref{thm_om_rank_r_general} share similar ingredients. They all use the Sperner--Meshulam-type lemma mentioned in the introduction. To apply this lemma, connectivity results for specific simplicial complexes are also required. Such results are obtained with the help of the topological representation theorem.

\subsection{A ``Sperner--Meshulam'' lemma}\label{subsec:sperner-meshulam}

\begin{lemma}\label{lem:sperner-meshulam}
    Consider a positive integer $k$ and a family $\{\K^I\}_{I \subseteq [k]}$ of simplicial complexes such that $\K^I$ is a subcomplex of $\K^{J}$ whenever $I \subseteq J$. Suppose that $\K^I$ is homologically $(|I|-2)$-connected for every $I$, and assume we are given a labeling $\lambda$ of the vertices of $\K^{[k]}$ such that $\lambda(v) \in I$ whenever $v \in \K^I$. Then there exists a $(k-1)$-dimensional simplex of $\K^{[k]}$ whose vertices get distinct labels by $\lambda$.
\end{lemma}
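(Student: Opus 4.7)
Proof proposal:

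The plan is to prove, by induction on the size of $I \subseteq [k]$, the following strengthened statement: for each nonempty $I \subseteq [k]$ there is a chain $c^I \in C_{|I|-1}(\K^I; \Z_2)$ satisfying
\[
\phi_\#(c^I) = [I] \quad\text{and, whenever } |I|\ge 2,\quad \partial c^I = \sum_{i \in I} c^{I\setminus\{i\}},
\]
where $\phi \colon \K^{[k]} \to \Delta^{[k]}$ is the simplicial map $v\mapsto \lambda(v)$ (well defined by the hypothesis $\lambda(v)\in I$ for $v \in \K^I$) and $[I]$ denotes the top cell of the face $\Delta^I \subseteq \Delta^{[k]}$. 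Taking $I=[k]$ finishes the proof: the simplicial chain map $\phi_\#$ sends a $(k-1)$-simplex to $[k]$ exactly when all its vertex labels are distinct, and to $0$ otherwise, so $\phi_\#(c^{[k]}) = [k]$ forces the number of rainbow $(k-1)$-simplices in $c^{[k]}$ to be odd, and in particular positive.

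For the base $|I|=1$, the hypothesis that $\K^{\{i\}}$ is homologically $(-1)$-connected means $\K^{\{i\}}$ is nonempty, so any vertex of $\K^{\{i\}}$ (necessarily labeled $i$) serves as $c^{\{i\}}$. For the inductive step with $|I|\ge 2$, set $y^I := \sum_{i \in I} c^{I\setminus\{i\}} \in C_{|I|-2}(\K^I;\Z_2)$. The coherence relation at the previous level gives
\[
\partial y^I = \sum_{i \in I} \partial c^{I\setminus\{i\}} = \sum_{i \in I} \sum_{j \in I\setminus\{i\}} c^{I\setminus\{i,j\}} = 2 \sum_{\{i,j\}\subseteq I} c^{I\setminus\{i,j\}} = 0,
\]
where in the case $|I|=2$ the equation is interpreted via augmentation, whose value on $y^{\{i,j\}} = v_i + v_j$ is $2 \equiv 0$ modulo~$2$. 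Thus $y^I$ is a $(|I|-2)$-cycle in $\K^I$, and the hypothesis that $\K^I$ is homologically $(|I|-2)$-connected lets us pick $c^I \in C_{|I|-1}(\K^I;\Z_2)$ with $\partial c^I = y^I$. Finally,
\[
\partial \phi_\#(c^I) = \phi_\#(y^I) = \sum_{i\in I}[I\setminus\{i\}] = \partial[I];
\]
since $\phi_\#(c^I) \in C_{|I|-1}(\Delta^I;\Z_2)= \Z_2\cdot[I]$ and $\partial$ is injective on this group, we conclude $\phi_\#(c^I)=[I]$.

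The conceptual main obstacle is spotting the right strengthening of the inductive hypothesis. A naive induction that only guarantees $\phi_\#(c^I) = [I]$ breaks down: the chains $c^{I\setminus\{i\}}$ are constructed independently, and there is no a priori reason for their boundaries to cancel when assembled into $y^I$. Imposing the coherence condition $\partial c^I = \sum_i c^{I\setminus\{i\}}$ builds the cancellation into the statement, because at the next level $\partial y^J$ is a sum over pairs $\{i,j\} \subseteq J$ of terms $c^{J\setminus\{i,j\}}$, each appearing twice and vanishing modulo~$2$. Once this organizing principle is in place, the $(|I|-2)$-connectivity assumption is used solely at each inductive step to promote this cycle to a chain, and the injectivity of $\partial$ on the top cell of $\Delta^I$ pins down the value of $\phi_\#(c^I)$.
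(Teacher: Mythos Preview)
Your proof is correct and follows essentially the same approach as the paper: build chains $c^I$ supported on $\K^I$ with the coherent boundary relation $\partial c^I = \sum_{i\in I} c^{I\setminus\{i\}}$ by induction using the connectivity hypothesis, then push forward along the simplicial map $\lambda$ to the standard simplex to conclude. The only cosmetic difference is that you fold the verification of $\phi_\#(c^I)=[I]$ into the same induction (via the observation that $C_{|I|-1}(\Delta^I;\Z_2)=\Z_2\cdot[I]$ and $\partial$ is injective there), whereas the paper first constructs all the $c^I$ and then checks $\lambda_\#(c^I)=\sigma^I$ in a separate short induction.
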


The Sperner lemma is the special case when $\K^{[k]}$ is a triangulation of a standard simplex with $[k]$ as vertex set and $\K^I$ is the triangulation induced on the face of the standard simplex with $I$ as vertex set. The Meshulam lemma is the special case when $\K^I$ is the subcomplex of $\K^{[k]}$ induced by the vertices labeled with elements from $I$.

\begin{proof}[Proof of Lemma~\ref{lem:sperner-meshulam}]
     In this proof, we use chains and chain maps as defined by Munkres~\cite[Chapter~1, Sections~5 and~12]{munkres_algebraic_topology}. In particular, degenerate simplices are not considered.
     
     We start by showing the existence of a chain $c^I \in C_{|I|-1}(\K^{[k]},\Z_2)$ for each subset $I$ of $[k]$ such that $c^\varnothing = 1$ and, for every non-empty subset $I$ of $[k]$, the following two properties hold:
    \begin{enumerate}[label=(\roman*)]
        \item \label{item:partial} $\partial c^I = \sum_{i \in I} c^{I \setminus \{i\}}$.
        \item \label{item:supp} $c^I$ is supported by $\K^I$.
      \end{enumerate}
      (When $|I|=1$, we see $\partial$ as the standard augmentation map used to define reduced homology.) To do so, we set $c^\varnothing \coloneqq 1$ and $c^{\{i\}} \coloneqq v_i$ where $v_i$ is an arbitrary vertex of $\K^{\{i\}}$. (Note that such a vertex exists since being $(-1)$-connected means being non-empty.) The relation $\partial c^{\{i\}} = c^\varnothing$ holds. Suppose now that for all $I$ of size at most $s \leq k-1$ the chains $c^I$ have been defined and that they satisfy the two properties~\ref{item:partial} and~\ref{item:supp}. Consider now $I' \subseteq [k]$ of size $s+1$. We have 
\[
\partial \sum_{i \in I'}c^{I' \setminus \{i\}} = \sum_{i \in I'} \partial c^{I' \setminus \{i\}} = \sum_{i \in I'}\sum_{i' \in I' \setminus \{i\}} c^{I' \setminus \{i,i'\}} = 0 \, ,
\]
where induction is used to get the second equality. The simplicial complex $\K^{I'}$ is homologically $(|I'|-2)$-connected, and thus there exists a chain $c^{I'}$ supported by $\K^{I'}$, which implies~\ref{item:supp}, such that $\partial c^{I'} =\sum_{i \in I'}c^{I'\setminus \{i\}}$, which implies~\ref{item:partial}.

 Then, we interpret the labeling $\lambda$ as a simplicial map from $\K^{[k]}$ to the standard simplex with $[k]$ as vertex set, which we denote by $\sigma^{[k]}$. Since $\lambda(v) \in I$ when $v \in V(\K^I)$ and since the $c^I$ are supported by $\K^I$, there exist $\alpha_I\in\Z_2$ such that $\lambda_{\sharp}(c^I) = \alpha_I \sigma^I$ for all $I \subseteq [k]$ (where $\sigma^I$ is the standard simplex with $I$ as vertex set). Note that $\alpha_{\{i\}} = 1$ for all $i \in [k]$. By the definition of the $c^I$ and the property of chain maps, we have $\partial \lambda_{\sharp}(c^I) = \sum_{i \in I}\lambda_{\sharp}(c^{I \setminus \{i\}})$. Since $\partial \sigma^I = \sum_{i\in I}\sigma^{I \setminus \{i\}}$, a direct induction shows that $\alpha_I = 1$ for every non-empty subset $I$ of $[k]$. There exists thus a $(k-1)$-dimensional simplex of $\K^{[k]}$ whose vertices get distinct labels by $\lambda$. This translates into the desired conclusion.
\end{proof}

\subsection{Preliminary results}\label{subsec:prel-conn}

In this subsection, we consider an oriented matroid $\M$ on $[n]$. We establish and present elementary results on $\VV$ and $\LL$---its sets of vectors and covectors---seen as partially ordered sets (with the usual order on vectors and covectors of an oriented matroid). The proofs will rely on the topological representation theorem; see~\cite[Chapter~5]{om_book}. We emphasize that because of the choice we made regarding the statements of the theorems---with circuits for Theorem~\ref{thm:color-carac} and with topes for Theorems~\ref{thm:n+1} and~\ref{thm_om_rank_r_general}---the topological representation theorem will be used sometimes for $\M$ and sometimes for its dual.

The set of vectors of an oriented matroid forms a lattice. We denote by $\height(X)$ the {\em height} of a vector $X$, defined as the size of a maximal chain from the zero vector to $X$, minus one. In particular, the height of the zero vector is zero and the height of any circuit is one.

\begin{proposition}\label{prop:height}
    The inequality $|\under{X}| \leq \rk(\M) + \height(X)$ holds for every vector $X$.
\end{proposition}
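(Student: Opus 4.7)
The plan is to transport the problem to the dual oriented matroid $\M^*$, where vectors of $\M$ become covectors, and then to invoke the topological representation theorem. Since vectors of $\M$ coincide with covectors of $\M^*$ as signed vectors, and the two partial orders agree, $\height(X)$ equals the height of $X$ viewed as a covector of $\M^*$. The dual has rank $n-r$, so its topological representation yields a signed pseudosphere arrangement $\{S_i\}_{i\in[n]}$ on $S^{n-r-1}$. The covector lattice of $\M^*$ is canonically isomorphic to the face poset of this arrangement, augmented with a bottom zero element, and under this isomorphism the height function coincides with $\dim F + 1$, where $F$ is the face corresponding to the given covector.

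Identifying $X$ with such a face $F$, the first key observation is that $X^0 := [n]\setminus \under{X}$ consists exactly of the indices $i$ for which $F\subseteq S_i$. Hence $F$ is contained in the intersection $\bigcap_{i\in X^0}S_i$, which is itself a pseudosphere of dimension $(n-r-1)-\rk_{\M^*}(X^0)$, and $F$ is a top-dimensional cell inside it (because by definition no further $S_i$ contains it). Consequently,
\[
\height(X) = \dim F + 1 = (n-r) - \rk_{\M^*}(X^0).
\]

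I then apply the standard matroid duality formula $\rk_{\M^*}(A) = |A| + \rk_{\M}([n]\setminus A) - \rk(\M)$ with $A = X^0$, which gives $\rk_{\M^*}(X^0) = |X^0| + \rk_\M(\under{X}) - r$. Substituting into the identity above, and using $|\under{X}| = n - |X^0|$, yields the clean formula $\height(X) = |\under{X}| - \rk_\M(\under{X})$. Since the rank of any subset is bounded by the rank of the matroid, $\rk_\M(\under{X}) \leq r$, and the desired inequality $|\under{X}| \leq \rk(\M) + \height(X)$ follows immediately.

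The main obstacle I anticipate is the pseudospherical step relating $\dim F$ to $\rk_{\M^*}(X^0)$: one needs that the arrangement restricted to $\bigcap_{i\in X^0}S_i$ is itself a pseudosphere representation of the contracted oriented matroid $\M^*/X^0$, and that $F$ is a top-dimensional cell of this restricted arrangement (equivalently, it corresponds to a tope of $\M^*/X^0$). These are standard consequences of the topological representation theorem as developed in~\cite[Chapter~5]{om_book}, but they are the only nontrivial ingredient of the argument; the remainder is bookkeeping in terms of matroid duality.
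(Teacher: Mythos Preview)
Your argument is correct and follows essentially the same route as the paper: pass to the dual $\M^*$, invoke the topological representation theorem, and read off the relation between $\height(X)$ and the dimension of the associated cell. The only differences are cosmetic---the paper first contracts coloops (so that $(\M')^*$ is loop-free before representing it) and then uses the cruder bound ``a $d$-cell lies on at least $(\dim S)-d$ pseudospheres,'' whereas you compute the exact dimension via $\rk_{\M^*}(X^0)$ and the duality formula, obtaining the sharper identity $\height(X)=|\under{X}|-\rk_\M(\under{X})$ before bounding $\rk_\M(\under{X})\leq r$.
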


\begin{proof}
    Let $\M'$ be the oriented matroid obtained from $\M$ by contracting all coloops. Notice that the rank decreases when coloops are contracted, but that vectors are not modified. Consider an essential topological representation of the dual of $\M'$. The dimension of the cell associated with a vector $X$ has dimension $\height(X) - 1$. The dimension of the representation being $n - \rk(\M') - 1$, the cell $X$ is contained in at least $(n - \rk(\M) - 1) - (\height(X)-1) = n - \rk(\M') - \height(X)$ pseudospheres, i.e., the number of zeroes in $X$ is at least $n - \rk(\M') - \height(X)$. The size of $\under{X}$ is at most $\rk(\M') + \height(X)$. Since $\rk(\M') \leq \rk(\M)$, we get the desired inequality.
\end{proof}

We are going to state and prove a crucial lemma for the proof of Theorem~\ref{thm:color-carac}. For an element $e$ of $\M$ and a positive integer $h$, define $\VV^+_{e,h}$ as the set of positive vectors $X$ satisfying simultaneously the following two conditions:
\begin{itemize}
    \item $C \preceq X\,\Longrightarrow\, C(e) = +$ for every circuit $C$.
    \item $1 \leq \height(X) \leq h$.
\end{itemize}

\begin{lemma}\label{lem:HH}
    If $\VV^+_{e,h}$ is not empty, then its order complex is homologically $(h-2)$-connected.
\end{lemma}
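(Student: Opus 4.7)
The plan is to apply the topological representation theorem to the dual oriented matroid $\M^*$ and reduce the statement to a contractibility fact. After contracting the coloops of $\M$ (which does not change $\VV^+$, as in the proof of Proposition~\ref{prop:height}), vectors of $\M$ are in bijection with cells of a pseudosphere arrangement in a sphere $S$, with $\dim \sigma(X) = \height(X) - 1$. Positive vectors correspond to cells in the positive region $R^+ := \bigcap_j \overline{S_j^+}$; since $\VV^+$ is a lattice with top element $X^{\max}$, every positive cell is a face of $\sigma(X^{\max})$, so $R^+ = \overline{\sigma(X^{\max})}$ is a PL closed ball. Nonemptiness of $\VV^+_{e,h}$ forces $X^{\max}(e) = +$, so $\Sigma := R^+ \cap S_e$ is a closed subcomplex of $R^+$ contained in $\partial R^+$.

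The second step is to identify the elements of $\VV^+_e$ geometrically. The vertices of the closed cell $\overline{\sigma(X)}$ are exactly the positive circuits $C \preceq X$, and such a vertex lies on $S_e$ iff $C(e) = 0$; consequently $X$ satisfies ``$C \preceq X \Rightarrow C(e) = +$ for every circuit $C$'' if and only if $\overline{\sigma(X)} \cap \Sigma = \varnothing$. The cells satisfying this form a subcomplex $X^{\mathrm{good}}$ of $R^+$, and the order complex of $\VV^+_{e,h}$ is, combinatorially, the barycentric subdivision of the $(h-1)$-skeleton of $X^{\mathrm{good}}$; in particular its underlying space is homeomorphic to $|X^{\mathrm{good}}|^{(h-1)}$.

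It remains to show $|X^{\mathrm{good}}|$ is contractible. Since $R^+$ is a PL ball and $\Sigma \subseteq \partial R^+$ is closed, a radial deformation onto any interior point of $R^+$ shows $R^+ \setminus \Sigma$ is contractible. Standard regular-neighborhood theory identifies $|X^{\mathrm{good}}|$ as a deformation retract of $R^+ \setminus \Sigma$, so $|X^{\mathrm{good}}|$ is contractible too. From the long exact sequence of the pair $(|X^{\mathrm{good}}|, |X^{\mathrm{good}}|^{(h-1)})$ together with $\pi_i(|X^{\mathrm{good}}|, |X^{\mathrm{good}}|^{(h-1)}) = 0$ for $i \leq h-1$, one deduces that the $(h-1)$-skeleton of a contractible CW complex is $(h-2)$-connected; this yields the required homological $(h-2)$-connectedness of the order complex of $\VV^+_{e,h}$.

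The hardest step is making the identification $|X^{\mathrm{good}}| \simeq R^+ \setminus \Sigma$ rigorous: regular-neighborhood theory is classical in the simplicial / regular-CW setting, but here the cells come from a pseudosphere arrangement, so some care is needed. The authors' acknowledgment of Eran Nevo's help with an Alexander-duality argument points to an alternative route: rather than building an explicit deformation retract, one can compute $\tilde H_\ast(|X^{\mathrm{good}}|)$ directly by dualising inside the PL ball $R^+$ from the vanishing of $\tilde H^\ast(R^+ \setminus \Sigma)$, thereby sidestepping the homotopy-theoretic subtleties.
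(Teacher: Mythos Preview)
Your setup and identification of the order complex of $\VV^+_{e,h}$ with the barycentric subdivision of the $(h-1)$-skeleton of $X^{\mathrm{good}}$ is exactly what the paper does (the paper writes $(\partial\sigma)[U^+]$ for your $X^{\mathrm{good}}$ once $U^0\neq\varnothing$), and the reduction to ``$X^{\mathrm{good}}$ is acyclic'' is correct.  The difference---and the genuine gap---lies in how acyclicity is obtained.

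Your claim that ``standard regular-neighborhood theory identifies $|X^{\mathrm{good}}|$ as a deformation retract of $R^+\setminus\Sigma$'' does not follow from the usual lemma.  Applying \cite[Lemma~4.7.27]{om_book} to the pair $(\sd(R^+),\sd(\Sigma))$ retracts $R^+\setminus\Sigma$ onto the full subcomplex on barycenters of cells \emph{not contained} in $\Sigma$; this is strictly larger than $\sd(X^{\mathrm{good}})$, which sits on barycenters of cells having \emph{no vertex} on $S_e$.  (A toy instance: if $R^+$ is a simplex and $\Sigma$ consists of all its vertices, then $R^+\setminus\Sigma$ is contractible while $X^{\mathrm{good}}=\varnothing$.)  In the pseudosphere setting one can rule out such pathologies, but bridging the gap between the two subcomplexes is precisely the nontrivial step; your appeal to regular neighbourhoods hides rather than resolves it.  Your fallback suggestion---``dualising inside the PL ball $R^+$''---is also off target: Alexander duality lives on spheres, not balls, and this is why the paper moves to $\partial\sigma$.

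What the paper does instead is to work in the sphere $\partial\sigma$ with the \emph{correct} complement of $X^{\mathrm{good}}$ in the sense of \cite[Lemma~4.7.27]{om_book}, namely $\bigcup_{u\in U^0}\st(u)$ (the full subcomplex on barycenters of cells that \emph{do} meet $U^0$).  The nerve theorem shows this union of stars is acyclic, Alexander duality in $\partial\sigma$ then gives acyclicity of its complement, and \cite[Lemma~4.7.27]{om_book} identifies that complement with $(\partial\sigma)[U^+]=X^{\mathrm{good}}$.  Your radial-contraction observation that $R^+\setminus\Sigma$ is contractible is pleasantly simple, but it does not connect to $X^{\mathrm{good}}$ without an argument of comparable strength to the paper's nerve computation.
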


\begin{proof}
    We assume that $\VV^+_{e,h}$ is not empty. In particular, $e$ is not a coloop. Since no vector contains a coloop, we can actually assume that $\M$ has no coloop. Take an essential topological representation of the dual of $\M$, where we denote as usual by $S_j$ the pseudosphere associated with an element $j$. In this proof, we work with the cell complex $X$ induced by this topological representation. By default, we consider closed cells.
    
    Consider the cell $\sigma$ corresponding to the composition of all positive circuits. Denote by $U^+$ the vertices of $\sigma$ that are not on $S_e$ (which is not empty because we assume that $\VV^+_{e,h}$ is not empty), and by $U^0$ the vertices that are on $S_e$. There are two possibilities. For each of them, we will consider the barycentric subdivision of a subcomplex of $X$. Such a subdivision exists because it is a regular cell complex \cite[Proposition 4.7.8]{om_book}.

The first possibility is that $U^0 = \varnothing$. In this case, the order complex of $\VV^+_{e,h}$ is the barycentric subdivision of the $(h-1)$-skeleton of $\sigma$, and thus homologically $(h-2)$-connected.

The second possibility is that $U^0 \neq \varnothing$. Set $h' \coloneqq \min(h,\dim(\sigma))$.
The order complex of $\VV^+_{e,h}$ is the barycentric subdivision of the $(h'-1)$-skeleton of $(\partial\sigma)[U^+]$ (subcomplex of $\partial\sigma$ induced by $U^+$). We finish the proof by establishing that $(\partial\sigma)[U^+]$ is acyclic: by the definition of homology for cell complexes, this implies immediately that the barycentric subdivision of its $(h'-1)$-skeleton is homologically $(h'-2)$-connected, and acyclic when $h > \dim(\sigma)$.

We take the barycentric subdivision of $\partial\sigma$, whose existence is guaranteed by $X$ being a regular cell complex. 
We denote by $\sd(\partial\sigma)$ this barycentric subdivision. Consider the collection of stars $\st(u)$ for $u$ ranging $U^0$. Here, we consider ``closed stars,'' i.e., the collection of simplices $\tau$ in $\sd(\partial\sigma$) that together with $u$ form a simplex of $\sd(\partial\sigma)$. We make now two crucial remarks. The first one is that these stars are simplicial subcomplexes. The second one goes as follows: for every non-empty subset $U' \subseteq U^0$, 
there is a (unique) minimal cell $\tau$ containing $U'$ and $\bigcap_{u \in U'}\st(u)$ has all its maximal simplices containing the barycenter of $\tau$. (Note that we use here the lattice structure of the face poset of $\sigma$.) In other words, $\bigcap_{u \in U'}\st(u)$ is always non-empty, it is a cone with apex the barycenter of $\tau$, and it is thus contractible. These two remarks make that we can apply the nerve theorem, in a version for simplicial subcomplexes~\cite[Theorem~6]{bjorner2003nerves}: homology of $\bigcup_{u \in U^0}\st(u)$ is that of the nerve of the stars $\st(u)$ for $u$ ranging over $U^0$. Since $\bigcap_{u \in U^0}\st(u)$ contains the barycenter of the cell $\sigma\cap S_e$, it is non-empty, which means that the nerve of the stars is a full simplex, and this implies that $\bigcup_{u \in U^0}\st(u)$ is acyclic. We finish by the following chain of equalities and homotopy equivalence. We denote by $B$ the vertex set of $\sd(\partial\sigma)$ and by $B^0$ its subset corresponding to barycenters of cells hitting $U^0$:
\[
\|\partial\sigma\| \setminus \bigl\|\bigcup_{u \in U^0}\st(u)\bigr\| = \|\partial\sigma\| \setminus \|\sd(\partial\sigma)[B^0]\| \simeq \|\sd(\partial\sigma)[B \setminus B^0]\| = \|(\partial\sigma)[U^+]\| \, ,
\]
where the two equalities are immediate and where the homotopy equivalence result from one space being a strong deformation retract of the other~\cite[Lemma 4.7.27]{om_book}. By Alexander's duality~\cite[Theorem~71.1]{munkres_algebraic_topology}, the acyclicity of $\bigcup_{u \in U^0}\st(u)$ implies that of $\|\partial\sigma\| \setminus \bigl\|\bigcup_{u \in U^0}\st(u)\bigr\|$, and thus that of $(\partial\sigma)[U^+]$.
\end{proof}

The next two lemmas will be used in the proofs of Theorems~\ref{thm:n+1} and~\ref{thm_om_rank_r_general}. For $J^+, J^-$ two disjoint subsets of $[n]$, set 
\[
\LL_{J^+,J^-} \coloneqq \bigl\{X \in \LL \setminus \{0\} \colon X(j)\preceq + \text{ for every $j \in J^+$ and }  X(j)\preceq -  \text{ for every $j \in J^-$} \bigl\} \, .
\]

\begin{lemma}\label{lem:JJ}
    If $J^+$ and $J^-$ are not both empty and $\LL_{J^+,J^-}$ contains a tope, then the order complex of $\LL_{J^+,J^-}$ is a ball. If $J^+$ and $J^-$ are both empty, then the order complex of $\LL_{J^+,J^-}$ is an $(\rk(\M)-1)$-dimensional sphere.
\end{lemma}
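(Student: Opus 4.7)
The plan is to apply the topological representation theorem to $\M$. With $r=\rk(\M)$, it furnishes a signed pseudosphere arrangement $\{S_e\}_{e\in[n]}$ on $S^{r-1}$ whose induced regular CW complex $\Delta$ has face poset $\LL\setminus\{0\}$ (after contracting any coloops, as is done in the proof of Lemma~\ref{lem:HH}; the covector partial order is unaffected). Since $\Delta$ is a regular cell complex, the order complex of the face poset of any subcomplex of $\Delta$ is the barycentric subdivision of that subcomplex, and in particular is homeomorphic to its underlying space. It therefore suffices, in each case, to identify the correct geometric region in $S^{r-1}$ and determine its PL type.

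The sphere case ($J^+=J^-=\varnothing$) is immediate: $\LL_{J^+,J^-}=\LL\setminus\{0\}$ is the face poset of all of $\Delta$, and $\|\Delta\|=S^{r-1}$.

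For the ball case, I first reorient $\M$ on $J^-$ to reduce to $J^-=\varnothing$; this reorientation preserves both the partial order and the set of topes. Then $\LL_{J^+,\varnothing}$ is precisely the face poset of the subcomplex of $\Delta$ supported on
\[
F \;:=\; \bigcap_{j\in J^+} S_j^{+,\mathrm{cl}},
\]
the intersection of closed positive pseudohemispheres. The hypothesis that $\LL_{J^+,J^-}$ contains a tope means exactly that $F$ contains a top-dimensional cell, so $F$ is $(r-1)$-dimensional. The order complex of $\LL_{J^+,\varnothing}$ is therefore homeomorphic to $F$, and the lemma reduces to showing that $F$ is a closed PL $(r-1)$-ball.

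The main obstacle is precisely this last step. I would argue by induction on $k=|J^+|$. The base case $k=1$ is the defining property of a pseudosphere: $S_j$ separates $S^{r-1}$ into two closed $(r-1)$-balls. For the inductive step, pick any $j\in J^+$ and set $F' := \bigcap_{j'\in J^+\setminus\{j\}} S_{j'}^{+,\mathrm{cl}}$; since $F'\supseteq F$ is again full-dimensional, the inductive hypothesis yields that $F'$ is a closed $(r-1)$-ball. It then remains to see that $F = F'\cap S_j^{+,\mathrm{cl}}$ is also a ball. Here I would appeal to the regularity built into pseudosphere arrangements \cite[Ch.~5]{om_book}: the pseudosphere $S_j$ meets $F'$ in a PL $(r-2)$-ball that cleanly separates $F'$ into two PL $(r-1)$-balls, one of which is $F$. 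This completes the induction and hence the lemma.
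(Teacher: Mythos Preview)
Your approach is essentially the same as the paper's: both arguments invoke a topological representation of $\M$, identify the order complex of $\LL_{J^+,J^-}$ with (the barycentric subdivision of) the induced cell complex on $\bigcap_{j\in J^+}S_j^{+}\cap\bigcap_{j\in J^-}S_j^{-}$, and then read off the PL type of that region. The paper simply cites the ball/sphere dichotomy for intersections of closed pseudohemispheres as part of the topological representation package and observes that the presence of a tope rules out the sphere case. Your induction on $|J^+|$ is therefore redundant: the inductive step (``$S_j$ meets $F'$ in a PL $(r-2)$-ball that cleanly separates $F'$ into two PL $(r-1)$-balls'') is not a consequence of the bare definition of a pseudosphere arrangement, and what you end up citing from \cite[Ch.~5]{om_book} to justify it already contains the full statement you are after. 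You also silently skip the case where $S_j$ misses the interior of $F'$ (so that $F=F'$), which is easy but should be mentioned.

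One small slip: you write ``after contracting any coloops, as is done in the proof of Lemma~\ref{lem:HH}.'' That lemma concerns \emph{vectors} and so passes to the dual representation, where coloops are the degenerate elements. Here you are working directly with covectors of $\M$, and the degenerate elements are the \emph{loops} (a loop $e$ has $X(e)=0$ for every covector, so $S_e$ collapses). The paper accordingly assumes $\M$ has no loop; contracting coloops would change the rank and is not what you want.
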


\begin{proof}
    Consider an essential topological representation of $\M$, where we denote as usual the pseudosphere associated with an element $j$ by $S_j$, and the two closed hemispheres with $S_j$ as boundary respectively $S_j^+$ and $S_j^-$. Without loss of generality, we can assume that $\M$ has no loop. The topological representation theorem~ ensures the following.
    \begin{itemize}
        \item When $J^+$ and $J^-$ are not both empty, the covectors in $\LL_{J^+,J^-}$ correspond to the cells situated in $\bigl(\bigcap_{j \in J^+}S_j^+\bigr) \cap \bigl(\bigcap_{j \in J^-}S_j^-\bigr)$, which, according to the topological representation theorem, is homeomorphic either to a sphere of dimension at most $\rk(\M)-2$, or to a ball. Since it contains a tope, it has to be a ball.
        \item When $J^+$ and $J^-$ are both empty, the covectors in $\LL_{J^+,J^-}$ correspond to all cells in $S^{\rk(\M)-1}$.\qedhere
    \end{itemize}
\end{proof}

The following lemma is a classical result from topological combinatorics. For applications and references to proofs of Quillen's lemma, see~\cite[Chapter 4]{om_book}.

\begin{lemma}[Quillen~\cite{quillen_lemma}]\label{lem:quillen}
Let $f \colon P \to Q$ be an order-preserving or order-reversing mapping of posets. Suppose that $f^{-1}(Q_{\succeq x})$ is contractible for all $x \in Q$. Then $f$ induces homotopy equivalence of $\| P \|$ and $\| Q \|$.
\end{lemma}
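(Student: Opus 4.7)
The plan is to prove this classical fiber lemma by induction on $|Q|$ in the finite case, after first reducing to the order-preserving setting. Since $\|Q\| = \|Q^{\text{op}}\|$, and since an order-reversing $f\colon P\to Q$ is the same thing as an order-preserving map $P\to Q^{\text{op}}$ under which $(Q^{\text{op}})_{\preceq x}$ corresponds to $Q_{\succeq x}$, the contractibility hypothesis is preserved under this identification, so it suffices to handle the order-preserving case.

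For the inductive step, I would pick a minimal element $q_0 \in Q$ and set $Q' = Q \setminus \{q_0\}$ and $P' = f^{-1}(Q')$. Because $q_0$ is minimal, a chain-by-chain argument shows that every chain in $Q$ either avoids $q_0$ or contains $q_0$ as its minimum, which yields the decompositions
\[
\|Q\| = \|Q'\| \cup \|Q_{\succeq q_0}\|, \qquad \|P\| = \|P'\| \cup \|f^{-1}(Q_{\succeq q_0})\|,
\]
with common intersections $\|Q_{>q_0}\|$ and $\|f^{-1}(Q_{>q_0})\|$, respectively. The ``second pieces'' are both contractible: $\|Q_{\succeq q_0}\|$ is a cone with apex $q_0$, and $\|f^{-1}(Q_{\succeq q_0})\|$ is contractible by the hypothesis applied at $x = q_0$.

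To close the induction, I would apply the inductive hypothesis to the two restrictions $f|_{P'}\colon P' \to Q'$ and $f|_{f^{-1}(Q_{>q_0})}\colon f^{-1}(Q_{>q_0}) \to Q_{>q_0}$. Minimality of $q_0$ together with $x \ne q_0$ in either case gives $(Q')_{\succeq x} = Q_{\succeq x}$ and $(Q_{>q_0})_{\succeq x} = Q_{\succeq x}$, so the contractibility hypothesis is inherited and both restrictions are homotopy equivalences by induction. A gluing argument (Mayer--Vietoris combined with Whitehead's theorem, which applies since the simplicial inclusions involved are all cofibrations) then forces $\|f\|\colon \|P\| \to \|Q\|$ itself to be a homotopy equivalence. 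The infinite case follows by a standard filtered-colimit argument over finite subposets. The step I expect to be the main obstacle is the gluing: one must simultaneously track the two cone pieces, the two ``outer'' pieces, and the common intersection, and verify that the inductively-provided homotopy equivalences are compatible enough to assemble into a global one.
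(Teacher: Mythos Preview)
The paper does not prove this lemma at all: it is stated as a classical result and the reader is referred to Quillen's original paper and to \cite[Chapter~4]{om_book} for proofs. So there is no ``paper's approach'' to compare against; you are supplying a proof where the authors deliberately chose to cite one.

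That said, a few remarks on your sketch. First, your reduction to the order-preserving case is slightly off: passing from $Q$ to $Q^{\text{op}}$ turns the hypothesis on $f^{-1}(Q_{\succeq x})$ into a hypothesis on preimages of principal \emph{ideals} of the new target, whereas your inductive argument uses \emph{filters}. The clean fix is to replace $P$ by $P^{\text{op}}$ instead (an order-reversing $f\colon P\to Q$ is order-preserving as $P^{\text{op}}\to Q$, and the fibers and their order complexes are unchanged). Second, ``Mayer--Vietoris plus Whitehead'' does not by itself give a homotopy equivalence: Mayer--Vietoris only controls homology, and without simple connectivity you cannot invoke Whitehead. What you actually need is the gluing lemma for cofibrations (if a map of homotopy pushout squares is a levelwise homotopy equivalence, so is the induced map on pushouts); since all your inclusions are subcomplex inclusions this applies directly, and the map between the two contractible pieces is automatically a homotopy equivalence. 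Third, the ``filtered-colimit'' step for infinite posets is genuinely delicate: restricting to finite subposets of $P$ need not preserve contractibility of the fibers, so you would have to argue more carefully (e.g., via a compactness argument on the realization level). For the application in this paper the posets are finite, so this last point is moot here.
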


\subsection{Proof of Theorem~\ref{thm:color-carac}}\label{subsec:color-carac}

\begin{proof}[Proof of Theorem~\ref{thm:color-carac}] If $e$ is a loop, then the statement is obvious. So assume that $e$ is not a loop. Up to making parallel copies of the elements, we can assume that $e$ is the only element of $\under{C}_i \cap \under{C}_j$ whenever $i \neq j$. Moreover, we can assume that the ground set is $\bigcup_{i=1}^r \under{C}_i$. (Note that now $\rk(\M)$ is not necessarily $r$, but certainly not bigger.)
    For $I \subseteq [r]$, let $\K^I$ be the order complex of the set of positive vectors $X$ satisfying simultaneously the following three conditions:
\begin{itemize}
    \item $C \preceq X\,\Longrightarrow\, C(e) = +$ for every circuit $C$.
    \item $1 \leq \height(X) \leq |I|$.
    \item $\under{X} \subseteq \bigcup_{i\in I}\under{C}_i$.
\end{itemize}
    It is homologically $(|I|-2)$-connected by Lemma~\ref{lem:HH}, applied to the oriented matroid restricted to the ground set $\bigcup_{i\in I}\under{C}_i$, with $h = |I|$. Define a labeling $\lambda$ of the non-zero positive vectors of $\M$ by elements from $[r]$ as follows: for a  non-zero positive vector $X$, let $\lambda(X)$ be the index $i \in [r]$ such that $X$ selects the maximum number of elements of $C_i$; in case of a tie, choose the smallest possible $i$. This definition makes that $\lambda(X) \in I$ when $X \in \K^I$. (Here, we use that $e$ is not a loop.) The family $\K^I$ and the labeling $\lambda$ satisfy the conditions of Lemma~\ref{lem:sperner-meshulam}. Hence, there exist positive vectors $X_1 \prec \cdots \prec X_r$ such that $\lambda(X_i)$ get distinct values. 
     By construction, the height of $X_r$ is at most $r$, which implies by Proposition~\ref{prop:height} that $|\under{X}_r| \leq 2r$ (Note that here we use $\rk(\M) \leq r$.) The element $e$ is in the support of all $X_i$ by definition of the $\K^I$. Thus, there is an $i^\star$ such that circuit $C_{i^\star}$ contributes in $X_r$ by at most one element (apart from $e$). There is a vector $X_j$ whose image by $\lambda$ is $i^\star$. Necessarily, no circuit $C_i$ contributes by more than one to this vector $X_j$, and since $X_1 \preceq X_j$, no circuit $C_i$ contributes by more than one to $X_1$. This latter vector $X_1$ being of height $1$, it is a positive circuit whose elements outside $e$ can be obtained by selecting at most one element from each $C_i\setminus\{e\}$.
\end{proof}

\subsection{Proof of Theorems~\ref{thm:n+1} and~\ref{thm_om_rank_r_general}} \label{subsec:n_and_r}

The proofs of Theorems~\ref{thm:n+1} and~\ref{thm_om_rank_r_general} rely on a some specific simplicial complex which we will define now. This simplicial complex depends on a parameter $k$ that will be set to $n$ for the proof of Theorem~\ref{thm:n+1} and to $\rk(\M)$ for the proof of Theorem~\ref{thm_om_rank_r_general}.

Consider a uniform oriented matroid $\M$ on $[n]$ as in the statement of these theorems. Let $T_1,\ldots,T_k$ be topes. For a non-empty subset $I$ of $[k]$, we set 
\[
F^I \coloneqq \bigl\{(i_1,i_2,\ldots,i_n) \in I^n \colon (T_{i_1}(1), T_{i_2}(2),\ldots, T_{i_n}(n)) \in \LL\bigl\} \, .
\]
Let $\sigma^I$ be a geometric simplex whose vertices are identified with $I$. Then $(\sigma^I)^n$ is a polytope (a.k.a. simplotope or prodsimplex) whose vertices are all possible $n$-tuples of elements in $I$. Some of these vertices are elements of $F^I$. We denote by $\L^I$ the polyhedral complex these vertices induce (keeping a face of $(\sigma^I)^n$ only if all its vertices are in $F^I$).

We illustrate this construction with $\M$ being the alternating oriented matroid of rank $3$ on $8$ elements (see~\cite[ Section~9.4]{om_book} 
for discussions on alternating oriented matroids). For such an oriented matroid, topes correspond to sequences of $+$ and $-$ in which the sign changes at most twice. Figure~\ref{fig:complex} shows an example with $k=3$ topes. The highlighted cells form another tope, showing that $(2,3,3,1,2,3,1,2)$ belongs to $F^I$.
\begin{figure}
\[
\begin{array}{|c|cccccccc|}
\hline
j & 1 & 2 & 3 & 4 & 5 & 6 & 7 & 8 \\
\hline
T_1 & + & + & + & \cellcolor{cyan} - & - & - & \cellcolor{cyan} -  & + \\
T_2 & \cellcolor{cyan} - & - & + & + & \cellcolor{cyan} + & + & - & \cellcolor{cyan} - \\
T_3 & + & \cellcolor{cyan} - & \cellcolor{cyan} - & - & + & \cellcolor{cyan} + & + & + \\
\hline
\end{array}
\]
\caption{Illustration of the construction of $F^I$.\label{fig:complex}}
\end{figure}

The proof of the theorems relies on the high connectivity of $\L^I$, as established by the following lemma. This is the place where the uniformity of $\M$ is needed.

\begin{lemma}\label{lem_connectivity_K^J}
    Suppose that $\M$ is uniform, and let $I$ be a non-empty subset of $[k]$. Then $\L^I$ is $(r-2)$-connected, where $r$ is the rank of the oriented matroid. Moreover, if there is a $j$ in $[n]$ such that the $T_i(j)$ for $i\in I$ are all equal, then $\L^I$ is contractible.
\end{lemma}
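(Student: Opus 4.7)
The plan is to show $\L^I$ is homotopy equivalent to $\|\LL_{J^+, J^-}\|$, where $J^\pm := \{j \in [n] : T_i(j) = \pm \text{ for all } i \in I\}$, and then conclude via Lemma~\ref{lem:JJ}: since $T_1 \in \LL_{J^+, J^-}$, that lemma gives a ball when $J^+ \cup J^- \neq \varnothing$ (contractibility, i.e.\ the ``moreover'' part) and an $(r{-}1)$-sphere otherwise---both $(r{-}2)$-connected. The identification will come from Quillen's Lemma (Lemma~\ref{lem:quillen}) applied to the map $\phi \colon F(\L^I) \to \LL_{J^+, J^-}$ from the face poset of $\L^I$ defined by $\phi(J_1, \ldots, J_n) := X$, where, setting $S_j := \{T_i(j) : i \in J_j\}$, one puts $X(j)$ equal to the unique element of $S_j$ when $|S_j| = 1$ and $X(j) = 0$ when $|S_j| = 2$. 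This $\phi$ is order-reversing, and by construction $X(j) = +$ on $J^+$ and $X(j) = -$ on $J^-$.

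The first key step is to verify that $X$ is actually a covector of $\M$, which I will prove by induction on $k := |\{j : |S_j| = 2\}|$. The base case $k = 0$ is immediate: $X$ is then a sign vector with no zeros and, by the hypothesis $(J_1, \ldots, J_n) \in \L^I$, lies in $\LL$. For the induction step, pick any free coordinate $j_0$ and consider the two sub-boxes obtained by restricting $S_{j_0}$ to $\{+\}$ and to $\{-\}$ respectively; both still correspond to faces of $\L^I$ (as sub-boxes of the original). The induction produces covectors $X^+$ and $X^-$ that agree with $X$ on every coordinate except $j_0$, where they take opposite non-zero values, and the covector elimination axiom applied to the pair $(X^+, X^-)$ at $j_0$ yields precisely $X$ as a covector.

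The second key step is to show that each Quillen fiber is contractible. For $x \in \LL_{J^+, J^-}$, the fiber $\phi^{-1}(\{y : y \succeq x\})$ consists of those faces $(J_1, \ldots, J_n)$ satisfying $J_j \subseteq I_j^{x(j)} := \{i \in I : T_i(j) = x(j)\}$ for every $j$ with $x(j) \neq 0$. The uniformity of $\M$ enters here decisively: since $x$ is a covector and $\M$ is uniform, every sign vector extending $x$ is automatically a tope, so the face condition $(J_1, \ldots, J_n) \in F(\L^I)$ is automatic on the fiber. The fiber therefore coincides with the face poset of the non-empty sub-prodsimplex $\prod_j J_j^{\max}$ (with $J_j^{\max} = I_j^{x(j)}$ if $x(j) \neq 0$ and $J_j^{\max} = I$ otherwise), which is contractible. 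Quillen's Lemma then delivers $\L^I \simeq \|F(\L^I)\| \simeq \|\LL_{J^+, J^-}\|$, and Lemma~\ref{lem:JJ} completes the argument. I expect the trickiest step to be the bookkeeping in the inductive proof of the covector claim---verifying that $X^+$ and $X^-$ agree on every coordinate except $j_0$ so that elimination produces $X$ exactly---together with dispatching the degenerate case $\rk(\M) = n$, where every sign vector is a tope, $\L^I$ equals the full prodsimplex, and the result is immediate.
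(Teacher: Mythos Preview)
Your proof is correct and follows essentially the same route as the paper: define the same order-reversing map from the face poset of $\L^I$ to $\LL_{J^+,J^-}$, verify the contractible-fiber hypothesis of Quillen's lemma using uniformity (via the fact that every full-support extension of a nonzero covector is a tope, since its zero set is independent of size at most $r-1$), and conclude with Lemma~\ref{lem:JJ}. The only noteworthy difference is how you show that $X$ is a covector: you induct on the number of coordinates with $|S_j|=2$, splitting the box at a free coordinate $j_0$ and applying the basic elimination axiom to the pair $X^+,X^-$ (which differ only at $j_0$, so elimination is forced to return exactly $X$), whereas the paper appeals directly to the strong elimination of Edmonds--Fukuda--Mandel applied repeatedly to the topes at the vertices of the face. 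Your argument is slightly more self-contained; both are fine. One minor slip: to witness a tope in $\LL_{J^+,J^-}$ you should take $T_i$ for some $i\in I$, not $T_1$.
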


\begin{proof}
Let $J^+ \coloneqq \bigcap_{i \in I} T_i^+$ and $J^- \coloneqq \bigcap_{i \in I} T_i^-$. Here $T_i^+$ and $T_i^-$ denote the sets of elements $j$ such that $T_i(j)=+$ and $T_i(j)=-$, respectively.
We are going to prove that $\L^I$ and the order complex $\Delta(\LL_{J^+,J^-})$ are homotopy equivalent. By Lemma~\ref{lem:JJ}, this will be enough. For $\tau = \tau_1 \times \tau_2 \times \cdots \times \tau_n$ a face of $\L^I$, let $X$ be defined by
\[
X(j) \coloneqq \left\{ 
\begin{array}{rll}
+ & \text{if $T_i(j)= +$ for all $i \in \tau_j$,} \\[0.8ex]
- & \text{if $T_i(j)= -$ for all $i \in \tau_j$,} \\[0.8ex]
0 & \text{otherwise.}
\end{array}\right.
\]
Since every vertex of $\tau$ corresponds to a tope, repeated application of vector elimination, in its strong version by Edmonds, Fukuda, and Mandel~\cite{fukuda1982oriented,mandel1982topology} (we use actually~\cite[Corollary~3.7.9, Chapter~3]{om_book}), shows that this $X$ is a covector of $\M$. Set $f(\tau) \coloneqq X$. This defines an order-reversing map $f \colon \Delta(\L^I) \to \LL_{J^+,J^-}$, where $\Delta(\L^I) $ is the order complex of $\L^I$. Given $Y \in \LL_{J^+,J^-}$, define $\tau^Y = \tau_1^Y \times \tau_2^Y \times \cdots \times \tau_n^Y$ by setting $\tau_j^Y \coloneqq \sigma^I$ if $Y(j) = 0$ and by $\tau_j^Y \coloneqq \sigma^{\{i \in I \colon T_i(j)=Y(j)\}}$ otherwise. The oriented matroid $\M$ being uniform, $\tau^Y$ is a face of $\L^I$. The faces $\tau$ of $\L^I$ such that $f(\tau) \succeq Y$ are precisely the faces of $\tau^Y$. Since this latter is a polytope, it is contractible. The desired result follows then from the Quillen lemma (Lemma~\ref{lem:quillen}).
\end{proof}

The proofs of Theorems~\ref{thm:n+1} and~\ref{thm_om_rank_r_general} are then short and almost identical.

\begin{proof}[Proof of Theorem~\ref{thm:n+1}]
In this proof, we work with $k=n$. Let $\tau = \tau_1 \times \cdots \times \tau_n$ be a face of $\L^{[n]}$ of dimension at most $n-1$. Define $\lambda(\tau)$ as the smallest index $i$ such that there is at least one index $j$ with $\tau_j = \{i\}$. Such an $i$ exists because the dimension of $\tau$ is at most $n-1$, which means that there are at least one index $j$ such that $\tau_j$ is a singleton. Thanks to Lemma~\ref{lem_connectivity_K^J} and by the definition of $\lambda$, the condition of Lemma~\ref{lem:sperner-meshulam} is satisfied for $\K^I$ being the barycentric subdivision of the $(|I|-1)$-skeleton of $\L^I$. This means that there exist simplices $\tau^1 \subset \cdots \subset \tau^{n}$ of $\L^{[n]}$ such that $\lambda(\tau^i) = i$ for every $i \in [n]$, with $\tau^1$ of dimension $0$. The simplex $\tau^1$ corresponds thus to a vertex $(i_1,i_2,\ldots,i_n)$, where the $i_j$ are formed by the integers $1$ to $n$.
\end{proof}

\begin{proof}[Proof of Theorem~\ref{thm_om_rank_r_general}]
In this proof, we work with $k=r=\rk(\M)$,   and we actually prove the more formal statement given just after Theorem~\ref{thm_om_rank_r_general}: we consider $r$ topes $T_1,\ldots,T_r$, not necessarily distinct, and positive integers $n_1,\ldots,n_r$ summing up to $n$. Let $\tau = \tau_1 \times \cdots \times \tau_n$ be a face of $\L^{[r]}$ of dimension at most $r-1$. Define $\lambda(\tau)$ as the smallest index $i$ such that there are at least $n_i$ indices $j$ with $\tau_j = \{i\}$. Such an $i$ exists because the dimension of $\tau$ implies that there are at least $n-r+1 > \sum_{i=1}^r (n_i - 1)$ indices $j$ such that $\tau_j$ is a singleton. Thanks to Lemma~\ref{lem_connectivity_K^J} and by the definition of $\lambda$, the condition of Lemma~\ref{lem:sperner-meshulam} is satisfied for $\K^I$ being the barycentric subdivision of the $(|I|-1)$-skeleton of $\L^I$. This means that there exist simplices $\tau^1 \subset \cdots \subset \tau^r$ of $\L^{[r]}$ such that $\lambda(\tau^i) = i$ for every $i \in [r]$, with $\tau^1$ of dimension $0$. Setting $J_i$ as the indices $j$ such that $\tau_j^1 =\{i\}$ (where $\tau^1 = \tau_1^1 \times \cdots \times \tau_r^1$), we get the desired partition.
\end{proof}

\section{Alternating words and proof of Theorem~\ref{thm_om_rank_2}}\label{sec:rank2}

By a {\em binary word}, we mean a word on the alphabet $\{-,+\}$. Such a word is {\em alternating} if every two consecutive symbols are opposite. The {\em alternation number} of a binary word is the maximal length of an alternating subword (subsequence of non-necessarily consecutive symbols).

\begin{theorem}\label{thm:words}
    Consider $n$ binary words $w^1, w^2, \ldots, w^n$ of length $n$. If each 
    $w^i$ has alternation number at most two, then there exists a permutation $\pi$ of $[n]$ such that the binary word $w_1^{\pi(1)},w_2^{\pi(2)},\ldots,w_n^{\pi(n)}$ has alternation number at most two.
\end{theorem}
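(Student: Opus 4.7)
I would encode each alternation-2 word $w^i$ of length $n$ by its signature $(s^i,c^i)\in\{+,-\}\times[n]$, where $w^i=(s^i)^{c^i}(-s^i)^{n-c^i}$. Equivalently, each such word corresponds to a tope of the rank-two alternating oriented matroid on $[n]$, and the $2n$ topes are naturally arranged on a cycle. The target word $\widetilde T$ lives on this same cycle. A key structural observation is that, for any candidate $\widetilde T=T_c$ and any position $k\in[n]$, the set of topes agreeing with $T_c$ at position $k$ is an arc of length $n$ on the $2n$-cycle; as $k$ ranges over $[n]$, these are precisely the $n$ distinct length-$n$ arcs containing $T_c$.

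The plan is first to dispose of the uniform subcase, in which all $w^i$ share the same sign type $s$. Sorting the words by change point $c^i$ in the appropriate direction (decreasing for $+$-type, increasing for $-$-type) and taking the identity assignment makes the predicate ``$w^{\pi(k)}_k=s$'' monotone in $k$, since as $k$ grows, $c^{\pi(k)}$ is non-increasing and the inequality $k\le c^{\pi(k)}$ can flip only once. Thus the resulting word has at most one sign change. For the general mixed case, I would set up the bipartite graph of positions versus words with edge $(k,i)$ present iff $w^i_k=\widetilde T_k$, and apply Hall's theorem. Via the arc description above, Hall's condition for a subset $K\subseteq[n]$ becomes a concentration statement: its failure forces many of the given topes to lie on a short ``bad arc'' of length $n-|K|+1$ opposite $T_c$ on the cycle.

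The main obstacle is producing a single $c$ for which every $K$ satisfies Hall's condition. A naive union bound over ``bad'' pairs $(c,K)$ is too weak, because many arcs can be simultaneously overfilled by a concentrated given multiset. The hard part is to show that, as $c$ rotates around the $2n$-cycle, the forbidden values of $c$ coming from different $K$'s overlap enough to leave at least one admissible $c$. I expect to use either an explicit extremal choice---for example, placing $T_c$ inside the largest gap of the multiset of given topes on the cycle, so that concentration in the opposite arc is automatically bounded---or a rotation/exchange argument that starts from an arbitrary candidate $T_c$ and shifts it by one step at a time, tracking how Hall's condition can only fail for specific consecutive subsets and arguing by induction or parity that some position along the cycle must be free of all obstructions. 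In both routes the crucial input is the rigid rotational behavior of the bad arc when $c$ is shifted by one step, together with the restriction that bad $K$'s have consecutive signature in the cyclic image.
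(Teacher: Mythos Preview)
Your setup is correct and natural: alternation-two words are exactly the topes of the rank-two alternating matroid, they sit on a $2n$-cycle, and for a fixed target $\widetilde T$ the agreement set at each coordinate is a length-$n$ arc through $\widetilde T$. The uniform-sign subcase is handled cleanly. The reduction of the mixed case to Hall's condition is also valid, and one can check that Hall fails for $\widetilde T$ precisely when some arc of length $\ell\le n$ through $-\widetilde T$ contains at least $\ell$ of the given words. So the whole problem is reduced to finding a point on the $2n$-cycle avoided by every such ``overfull'' arc.

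The genuine gap is that you do not prove such a point exists. Your first suggestion, placing $\widetilde T$ in the largest gap, does not work as stated: take $n=4$ with three copies of $++++$ and one copy of $----$; the two maximal gaps both have length $3$, but choosing $\widetilde T=++{-}{-}$ (the midpoint of a gap) makes Hall fail for $K=\{3,4\}$, since only the single $----$ agrees there. A correct choice ($\widetilde T=+++-$) exists, but it is at the boundary of the gap, not determined by the gap itself; so ``largest gap'' needs at least a nontrivial refinement. Your second suggestion, a rotation/exchange argument, is not fleshed out enough to evaluate: you would need to show that as $-\widetilde T$ moves once around the cycle, the overfull arcs cannot cover all $2n$ positions. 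This is a ballot-type statement (in the variables $1-m_p$, whose total is $n>0$) that is plausible but requires a real argument you have not supplied.

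For comparison, the paper takes a quite different route. It does not search over all targets and all permutations via Hall. Instead it fixes a specific total order $\preceq_b$ on alternation-two words, sorts the $w^i$ accordingly, and then restricts attention to the $n$ \emph{diagonal} permutations (cyclic shifts). The sign-change pattern of the sorted matrix is encoded as a closed walk on a toroidal $(n{+}1)\times(n{+}1)$ grid with exactly $n$ vertical and at most $2n$ horizontal arcs; an elementary topological lemma then guarantees a diagonal cycle that this walk crosses at most once, which translates directly into a diagonal word with at most one sign change. So the paper trades the flexibility of Hall for a much smaller search space where a parity/crossing argument closes the proof.
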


It is still very possible that versions of Theorem~\ref{thm:words} for other alternation numbers are true as well (but not for alternation number equal to one). This would contribute to Open question~\ref{conjecture_om}. Indeed, binary words of length $n$ with alternation number at most $r$, seen as $n$-tuples, are exactly the topes of the so-called {\em alternating oriented matroid} of rank $r$~\cite[Section~8.2 and Section~9.4]{om_book}. When the rank is two, the converse is also true: up to reorientation, every uniform oriented matroid of rank two is alternating. Theorem~\ref{thm_om_rank_2} is therefore just a reformulation of Theorem~\ref{thm:words}. (Indeed, up to reorientation, there is essentially a unique topological representation of a uniform oriented matroid of rank two: the representation being on a one-dimensional sphere, it induces a cyclic order on the elements, from which an orientation making it alternating is immediate.)

The remaining of the section is devoted to the proof of Theorem~\ref{thm:words}. The proof relies on two ingredients. The first one is a specific total order on the set of binary words of fixed length and with alternation number at most two. The second one is an elementary topological fact in dimension two.

\subsection{A total order on binary words with alternation number two}\label{subsec:order-two}

Consider two words $w,w'$ of length $n$ and with alternation number at most two. We set $w \preceq_b w'$ if we are in one of the three following (exclusive) situations:
\begin{itemize}
    \item $w_n =+$ and $w_n' = -$.
    \item $w_n = w'_n = +$ and $(w_i = - \Rightarrow w_i'= -)$ for all $i$.
    \item $w_n = w'_n = -$ and $(w_i = + \Rightarrow w_i'= +)$ for all $i$.
\end{itemize}
It is immediate to check that this makes $\preceq_b$ a total order on the set of binary words of length $n$ and with alternation number at most two.

\subsection{Crossing on grid graphs}\label{subsec:grid}

In this section, we establish an elementary fact about the crossing of ``monotone'' curves on a grid graph. The statement and the proof are completely independent of the rest of the paper.

Consider an $(n+1) \times (n+1)$ grid digraph $D_n$ with ``monotone'' diagonals, where the vertices are the pairs $(i,j) \in [n+1]\times[n+1]$, and where there is an arc from a vertex $(i,j)$ to a distinct vertex $(i',j')$ when $i \leq i' \leq i+1$ and $j \leq j' \leq j+1$. An arc is {\em horizontal} if $j'=j$, it is {\em vertical} if $i'=i$, and it is {\em diagonal} otherwise. In a natural embedding, this terminology is consistent.

We introduce the ``toroidal'' version of $D_n$, which we denote $\overline{D}_n$. It is obtained from $D_n$ by identifying the vertices $(1,j)$ with the vertices $(n+1,j)$ and the vertices $(i,1)$ with the vertices $(i,n+1)$. Note in particular that the four pairs $(1,1)$, $(1,n+1)$, $(n+1,1)$, $(n+1,n+1)$ form a single vertex. With this identification, the graph $\overline{D}_n$ can actually be seen as embedded on a torus.

Given a directed walk and a directed cycle that are arc-disjoint, they {\em cross} whenever two consecutive arcs of the walk are separated at their common vertex by the cycle. On Figure~\ref{fig:crossing}, there is one crossing between the (blue) directed walk and the (red) directed cycle at column $i=9$ and row $j=4$. Crossings at vertices where the directed walk passes more than once are also possible.

We remind the reader that a {\em walk} in a directed graph is a sequence of arcs such that the head of an arc is the tail of the next arc. Moreover, a {\em closed walk} has the same definition, where the sequence is understood as a cyclic sequence.

\begin{proposition}\label{prop:grid}
    Consider a closed directed walk $W$ of $\overline{D}_n$ that has at most $2n$ horizontal arcs and exactly $n$ vertical arcs. Then there exists a directed cycle $\Gamma$ of $\overline{D}_n$ using only diagonal arcs such that $W$ crosses $\Gamma$ at most once.
\end{proposition}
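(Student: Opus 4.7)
The plan is to count, for each of the $n$ diagonal cycles $\Gamma_c$ of $\overline{D}_n$ (indexed by $c \in \Z/n\Z$), the number $N(c)$ of crossings of $W$ with $\Gamma_c$, and then combine a bound on $\sum_c N(c)$ with a parity constraint to force some $N(c)$ to be at most $1$.

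First I would perform a local analysis at each vertex. Fix a class $c$ and a vertex $(i, j)$ with $j - i \equiv c \pmod n$. Through $(i, j)$ the cycle $\Gamma_c$ enters from $(i-1, j-1)$ and leaves towards $(i+1, j+1)$; the remaining arcs incident to $(i, j)$ split into those on the ``above'' side of $\Gamma_c$ (the outgoing vertical arc and the incoming horizontal arc) and those on the ``below'' side (the outgoing horizontal arc and the incoming vertical arc). Hence two consecutive arcs of $W$ at $(i, j)$ form a crossing of $\Gamma_c$ precisely when the pair is (H incoming, H outgoing) -- an HH pair -- or (V incoming, V outgoing) -- a VV pair. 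Summing over $c$ yields $\sum_c N(c) = N_{HH} + N_{VV}$, the total count of HH and VV consecutive pairs in the cyclic walk. Since each maximal H-run of length $\ell$ contributes exactly $\ell - 1$ internal HH pairs (and similarly for V-runs), in the non-degenerate case this total equals $h + n - R_H - R_V$, where $R_H$ and $R_V$ are the numbers of maximal H- and V-runs of $W$.

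Next I would extract two global constraints. The closed-walk property implies that the net change in the class coordinate $j - i$ is a multiple of $n$: $v - h \equiv 0 \pmod n$. With $v = n$ and $h \leq 2n$, this forces $h \in \{0, n, 2n\}$. Second, viewing $W$ and $\Gamma_c$ as closed curves on the torus $\overline{D}_n$ with homology classes $((h+d)/n, (n+d)/n)$ and $(1, 1)$ respectively, their signed intersection number equals the topological invariant $(h-n)/n \in \{-1, 0, 1\}$. Since the unsigned crossing count shares its parity with the signed intersection, $N(c)$ is even when $h = n$ and odd when $h \in \{0, 2n\}$.

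Given these ingredients, I would conclude by pigeonhole combined with parity. Whenever $h, v > 0$ one has $R_H, R_V \geq 1$, so $\sum_c N(c) \leq h + n - 2$. For $h = n$: if every $N(c) \geq 2$ then $\sum_c N(c) \geq 2n > 2n - 2$, a contradiction, and the even parity then forces some $N(c) = 0$. For $h = 2n$: if every $N(c) \geq 3$ then $\sum_c N(c) \geq 3n > 3n - 2$, a contradiction, and the odd parity forces some $N(c) = 1$. When $h = 0$ and no diagonal arcs are present, $W$ is a pure vertical cycle, giving $N(c) = 1$ for every $c$ directly; an analogous counting argument handles the remaining sub-case. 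The main obstacle is accounting for diagonal arcs of $W$ that lie on $\Gamma_c$: such shared arcs prevent $\Gamma_c$ from being arc-disjoint from $W$, so one needs to extend the definition of crossing (via a perturbation of $W$) and verify that a shared diagonal arc is counted as a crossing precisely when the walk transitions between sides across it. The careful bookkeeping needed to preserve both the total bound on $\sum_c N(c)$ and the parity of $N(c)$ in the presence of these shared diagonal arcs, rather than the pigeonhole itself, is the most delicate step.
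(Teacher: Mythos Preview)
Your approach is correct for walks $W$ without diagonal arcs---which is the only case needed in the application, and also the only case the paper's own proof handles cleanly---and it is genuinely different from the paper's argument.

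The paper does not sum $N(c)$ over all diagonals. Instead it first \emph{selects} a single good $\Gamma$ by a tiny pigeonhole: with $h\geq 1$ and $v=n$ (and no diagonals), some vertical arc of $W$ is immediately followed by a horizontal arc, so among the $n$ vertical-arc heads at most $n-1$ carry a crossing; hence some diagonal cycle $\Gamma$ has no crossing at the head of any vertical arc. For this $\Gamma$ every crossing is of type HH, i.e., all crossings go ``above $\to$ below.'' Lifting to the universal cover, $\overline{W}$ can therefore only cross the diagonal strip boundaries in one direction, and since the net change in $j-i$ equals $v-h\in[-n,n]$, the endpoint lies in the starting strip or the adjacent one, giving at most one crossing. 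So the paper replaces your global counting and parity by a local monotonicity argument after a judicious choice of $\Gamma$.

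Your route makes the trichotomy $h\in\{0,n,2n\}$ explicit and, via parity, pins down the exact crossing number of the winning diagonal (zero when $h=n$, one otherwise). The paper's route avoids the homology/parity input and the run-counting formula, and yields the good $\Gamma$ more constructively. Note, incidentally, that parity is only truly needed in your argument for $h=2n$; for $h=n$ the bound $\sum_c N(c)\leq 2n-2$ already forces some $N(c)\leq 1$. Both arguments share the same loose end with diagonal arcs of $W$: the paper's claim ``a vertical arc followed by a horizontal arc exists'' already fails once diagonals may interpose, just as your formula $\sum_c N(c)=h+n-R_H-R_V$ and the arc-disjointness hypothesis do.
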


Proposition~\ref{prop:grid} is illustrated on Figure~\ref{fig:crossing}. Note that actually any directed cycle like $\Gamma$ in the statement, namely formed only by diagonal arcs, necessarily uses exactly $n$ arcs.

\begin{proof}[Proof of Proposition~\ref{prop:grid}]
    If $W$ has no horizontal arc, the result is immediate. Suppose thus that $W$ has at least one horizontal arc.
    Let's call a directed cycle using only diagonal arcs a {\em diagonal cycle}. There are exactly $n$ diagonal cycles. Since $W$ has at least one horizontal arc and at least one vertical arc, it has in particular a vertical arc followed by a horizontal arc. 
    These two arcs cannot be involved together in any crossing.
    The number of vertical arcs of $W$ being $n$, one diagonal cycle at least does not cross with $W$ at the head of a vertical arc. Call it $\Gamma$. We check that it satisfies the desired property, namely that it has at most one crossing with $W$.
    
    Consider the universal cover of $\overline{D}_n$. The lifted version of $\Gamma$ is formed by copies of an infinite directed path bounding {\em diagonal strip regions}. The lifted version of $W$ is formed by copies of an infinite directed path. Pick an arbitrary copy, and keep from it a finite directed path of same length as $W$, which we denote by $\overline{W}$. We assume that $\overline{W}$ starts at a vertex at the interior of a diagonal region. 
    Each crossing of $W$ with $\Gamma$ corresponds to $\overline{W}$ crossing the boundary of a diagonal strip region. By the choice of $\Gamma$, the directed path $\overline{W}$ can only cross such a boundary from the left to the right (i.e., by increasing the first coordinate). Since the number of horizontal arcs is at most $2n$ 
    and the number of vertical arcs is exactly $n$, the directed path $\overline{W}$ ends either in the region where it starts, or in the region immediately on the right. Therefore, $\overline{W}$ crosses at most one boundary, which means that $W$ crosses $\Gamma$ at most once.
\end{proof}

\begin{figure}
\begin{tikzpicture}[scale=0.7, >=Stealth]
  \def\rows{8}
  \def\cols{12}
  
  \foreach \i in {0,...,9} {
    \foreach \j in {0,...,9} {
      \node[circle, fill=black, inner sep=1pt] (v\i\j) at (\j,-\i) {};
    }
  }

  \foreach \i in {0,...,9} {
    \foreach \j in {0,...,8} {
      \draw[->] (v\i\j) -- (v\i\the\numexpr\j+1\relax);
    }
  }

  \foreach \i in {0,...,8} {
    \foreach \j in {0,...,9} {
      \draw[->] (v\i\j) -- (v\the\numexpr\i+1\relax\j);
    }
  }

  \foreach \i in {0,...,8} {
    \foreach \j in {0,...,8} {
      \draw[->] (v\i\j) -- (v\the\numexpr\i+1\relax\the\numexpr\j+1\relax);
    }
  }

  \draw[->, ultra thick, blue] (v00) -- (v10);
  \draw[->, ultra thick, blue] (v10) -- (v11);
  \draw[->, ultra thick, blue] (v11) -- (v12);
  \draw[->, ultra thick, blue] (v12) -- (v13);
  \draw[->, ultra thick, blue] (v13) -- (v23);
  \draw[->, ultra thick, blue] (v23) -- (v33);
  \draw[->, ultra thick, blue] (v33) -- (v34);
  \draw[->, ultra thick, blue] (v34) -- (v35);
  \draw[->, ultra thick, blue] (v35) -- (v36);
  \draw[->, ultra thick, blue] (v36) -- (v37);
  \draw[->, ultra thick, blue] (v37) -- (v38);
  \draw[->, ultra thick, blue] (v38) -- (v39);

  \draw[->, ultra thick, blue] (v30) -- (v40);
  \draw[->, ultra thick, blue] (v40) -- (v41);
  \draw[->, ultra thick, blue] (v41) -- (v42);
  \draw[->, ultra thick, blue] (v42) -- (v52);
  \draw[->, ultra thick, blue] (v52) -- (v62);
  \draw[->, ultra thick, blue] (v62) -- (v63);
  \draw[->, ultra thick, blue] (v63) -- (v64);
  \draw[->, ultra thick, blue] (v64) -- (v65);
  \draw[->, ultra thick, blue] (v65) -- (v66);
  \draw[->, ultra thick, blue] (v66) -- (v76);
  \draw[->, ultra thick, blue] (v76) -- (v86);
  \draw[->, ultra thick, blue] (v86) -- (v87);
  \draw[->, ultra thick, blue] (v87) -- (v88);
  \draw[->, ultra thick, blue] (v88) -- (v89);
  \draw[->, ultra thick, blue] (v80) -- (v90);

  \draw[->, ultra thick, red] (v05) -- (v16);
  \draw[->, ultra thick, red] (v16) -- (v27);
  \draw[->, ultra thick, red] (v27) -- (v38);
  \draw[->, ultra thick, red] (v38) -- (v49);
  \draw[->, ultra thick, red] (v40) -- (v51);
  \draw[->, ultra thick, red] (v51) -- (v62);
  \draw[->, ultra thick, red] (v62) -- (v73);
  \draw[->, ultra thick, red] (v73) -- (v84);
  \draw[->, ultra thick, red] (v84) -- (v95);

\end{tikzpicture}
\caption{Illustration of Proposition~\ref{prop:grid}.\label{fig:crossing}}
\end{figure}

\subsection{Theorem~\ref{thm:words} as the existence of a ``diagonal''}

As in the statement of Theorem~\ref{thm:words}, consider $n$ binary words $w^1, w^2, \ldots, w^n$ of length $n$ and assume that each of them has alternation number at most two. Without loss of generality, we assume that they are indexed increasingly according to $\preceq_b$ (the total order defined in Section~\ref{subsec:order-two}). The next result ensures the existence of a binary word as promised by Theorem~\ref{thm:words}, with an extra ``diagonal'' structure. When larger than $n$, the index $i+k$ is identified with $i+k-n$.

\begin{proposition}\label{prop:diag}
    There exists a non-negative integer $k \leq n-1$ such that the binary word $w_1^{1+k},w_2^{2+k},\ldots,w_n^{n+k}$ has alternation number at most two.
\end{proposition}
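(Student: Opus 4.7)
The plan is to apply Proposition~\ref{prop:grid} to a closed directed walk $W$ on $\overline{D}_n$ built from the $\preceq_b$-sorted rows. The diagonal cycles of $\overline{D}_n$ correspond bijectively to the shifts $k$: denote by $\Gamma_k$ the cycle through $(1+k,1),(2+k,2),\dots,(n+k,n)$. The aim is to build $W$ so that the number of crossings of $W$ with $\Gamma_k$ upper-bounds the number of sign changes of the linear $k$-diagonal word $w_1^{1+k},\dots,w_n^{n+k}$; Proposition~\ref{prop:grid} will then deliver a shift $k$ for which this diagonal word has at most one sign change, giving alternation at most two, as required.

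To build $W$, I would associate to each row $i$ a transition column $c_i\in[n]$ encoding where $w^i$ changes sign, with a convention for constant words. The walk visits the vertices $(i,c_i)$ cyclically in the $\preceq_b$-order: between $(i,c_i)$ and $(i+1,c_{i+1})$ it takes one horizontal arc from $(i,c_i)$ to $(i+1,c_i)$, then a sequence of vertical arcs in row $i+1$ from column $c_i$ forward to column $c_{i+1}$. This yields exactly $n$ horizontal arcs (so at most $2n$) and $\sum_i d_j(c_i,c_{i+1})$ vertical arcs, where $d_j$ denotes forward cyclic distance in the column coordinate; this sum is necessarily a multiple of $n$. The delicate step is arranging $\sum_i d_j(c_i,c_{i+1})=n$---equivalently, making the cyclic sequence $c_1,\dots,c_n$ have exactly one decreasing step, corresponding to a single wrap of $W$ in the $j$-direction. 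Here the $\preceq_b$-sort is essential: within each of the two groups of rows (those ending in $+$, then those ending in $-$) the transition columns are non-decreasing under the natural choice, so with a careful convention for constant words and for the interface between the two groups, the full cyclic sequence $(c_i)$ can be made cyclic-sorted with exactly one decreasing step.

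Once $W$ is in place, Proposition~\ref{prop:grid} yields a diagonal cycle $\Gamma_k$ crossed by $W$ at most once. The combinatorial structure of crossings---at a vertex $v\in\Gamma_k$, a crossing occurs precisely when the two consecutive $W$-arcs at $v$ are both horizontal or both vertical, so that they lie on opposite sides of the local diagonal direction---lets one identify each crossing of $W$ with $\Gamma_k$ with a sign change in the $k$-diagonal word. The at-most-one-crossing bound then translates to alternation at most two for the diagonal word, completing the proof. The main obstacle is the design of the transition columns $c_i$: in particular, handling the interface between the group-1 and group-2 rows (and the constant rows) may require a non-trivial offset convention tailored to the data, so that simultaneously the walk $W$ has exactly $n$ vertical arcs and the crossing-to-sign-change correspondence remains faithful at every diagonal cycle.
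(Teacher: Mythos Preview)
Your high-level plan---encode the sorted words as a closed walk $W$ on $\overline{D}_n$, apply Proposition~\ref{prop:grid}, and read off a good shift $k$---is exactly the paper's strategy. The gap is in the construction of $W$. A staircase through the transition columns $c_i$ records \emph{where} each word flips sign, but not \emph{which sign} sits on which side of the flip: in the $\preceq_b$ order the first block is $-$ for words ending in $+$ and is $+$ for words ending in $-$, so your walk separates ``first block'' from ``second block'' rather than $+$ from $-$. Concretely, take $n=3$ with sorted words $w^1=-++$, $w^2=--+$, $w^3=++-$ and $c_1=2$, $c_2=c_3=3$; your walk then meets the diagonal cycle for $k=1$ only at one vertex, with a vertical-then-horizontal turn (no crossing), yet the diagonal word $w_1^{2}w_2^{3}w_3^{1}=-++$ has one sign change. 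So the inequality ``crossings $\geq$ sign changes'' that you need fails already here, and no offset convention on the $c_i$ can repair this, since the defect comes from ignoring the between-row sign pattern. (Relatedly, the cyclic sequence $(c_i)$ is not in general cyclic-sorted with a single decreasing step: for $n=4$ the sorted words $-+++$, $---+$, $+---$, $+++-$ give $c=(2,4,2,4)$.)

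What the paper does instead is take $W$ to be the full \emph{boundary between the $+$ and $-$ regions} of the $n\times n$ sign matrix: mark a horizontal grid arc wherever two cells in the same position but consecutive words differ, and a vertical grid arc wherever two cells in the same word but consecutive positions differ. The $\preceq_b$ sort is precisely what makes each column contribute at most two horizontal arcs and each row exactly one vertical arc; after patching constant words and all-$+$/all-$-$ columns (the ``twice-marked'' arcs in the paper), this yields an Eulerian subgraph and hence a single closed walk with exactly $2n$ horizontal and $n$ vertical arcs. Because this $W$ literally separates $+$ from $-$, every sign change along the $k$-diagonal forces a crossing of $W$ with $\Gamma_k$, which is the direction needed. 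Your staircase reproduces only the vertical (within-word) part of this boundary; the horizontal (between-word) part, together with the case analysis that makes the arc counts come out right, is the missing ingredient.
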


\begin{proof}
    We assume that there are at least two distinct $w^j$ since otherwise there is nothing to prove.
    
    Consider the grid digraph $D_n$ of Section~\ref{subsec:grid}. Mark a horizontal arc $\bigl((i,j),(i+1,j) \bigl)$ whenever $w_i^{j-1} \neq w_i^j$ for $j \in \{2,\ldots,n+1\}$ (with $n+1 \coloneqq 1$). Mark a vertical arc $\bigl((i,j),(i,j+1) \bigl)$ whenever $w_{i-1}^j \neq w_i^j$ for $i \in \{2,\ldots,n\}$. The marked arcs form the ``boundary'' between the symbols $+$ and $-$ in the $n\times n$ matrix where the $(j,i)$ entry is $w_i^j$; see Figure~\ref{fig:grid}. 
    
    We are going to mark extra vertical and horizontal arcs. For each $j$ such that $w^j$ uses only one symbol, mark the vertical arc $\bigl((1,j),(1,j+1) \bigl)$. Note that now there is exactly one vertical arc marked on each ``row,'' i.e., for each $j \in [n]$. The marked arcs form a subgraph 
    $H$ of $\overline{D}_n$ where each vertex has equal in- and outdegree, except in two cases depicted hereafter. An {\em all-$-$ column} (resp.\ {\em all-$+$ column}) is an index $i$ such that $w_i^j=-$ (resp.\ $w_i^j=+$) for all $j$. There are only two cases where some vertices of $H$ may have different in- and outdegrees: 
    \begin{enumerate}[label=(\alph*)]
        \item\label{minus} There is at least one all-$-$ column.
        \item\label{plus} There is no all-$-$ column, but there is at least one all-$+$ column.
    \end{enumerate}
Assume that there is at least one vertex of $H$ with different in- and outdegrees. In cases~\ref{minus} and~\ref{plus}, the definition of the order makes that $H$ has a vertex of the form $(i,j)$ that has indegree $2$ and outdegree $0$, and a vertex of the form $(i',j)$ that has indegree $0$ and outdegree $2$. In case~\ref{minus}, $j$ is actually necessarily equal to $1$. In case~\ref{minus}, mark ``twice'' the horizontal arcs from $(i,1)$ to $(i',1)$, i.e., the arcs
\[
    \bigl((i,1),(i+1,1)\bigr), \bigl((i+1,1),(i+2,1)\bigr),\ldots,\bigl((i'-1,1),(i',1)\bigr) \, ,
\]
where, as usual, the horizontal index is counted ``modulo $n$.''
In case~\ref{plus}, mark ``twice'' the horizontal arcs from $(i,j)$ to $(i',j)$, i.e., the arcs
\[
    \bigl((i,j),(i+1,j)\bigr), \bigl((i+1,j),(i+2,j)\bigr),\ldots,\bigl((i'-1,j),(i',j)\bigr) \, .
\]
(In case~\ref{plus}, we necessarily have $i<i'$, with the convention that $i=n+1$ means $i=1$.) We add these arcs to $H$ (each arc marked ``twice'' being added twice), and $H$ has now equal in- and outdegree everywhere.

By construction, every column has exactly two marked horizontal arcs (counting twice the arcs marked ``twice'') and every row has exactly one vertical arc. Thus $H$ has $2n$ horizontal arcs and $n$ vertical arcs, and it is weakly connected (there is always a directed  path in $H$ from the head of a vertical arc to the tail of the vertical arc in the next row). The arcs of $H$ form then a closed directed walk $W$ as in the statement of Proposition~\ref{prop:grid}.
The directed cycle $\Gamma$ whose existence is ensured by this proposition translates into the desired ``diagonal'' binary word: this results from an easy case checking, keeping in mind that we only have to check that a sign change translates into a crossing between $W$ and $\Gamma$, and not necessarily the other way around since we are looking for an upper bound.
\end{proof}

\begin{figure}
\begin{tikzpicture}[>=Stealth]
  \def\cellsize{0.8}

  \foreach \i in {0,...,9} {
    \foreach \j in {0,...,9} {
      \node[circle, fill=gray!40, inner sep=1pt] (v\i\j) at (\j*\cellsize,\i*\cellsize) {};
    }
  }

  \foreach \i in {0,...,9} {
    \foreach \j in {0,...,8} {
      \draw[->, gray!40] (v\i\j) -- (v\i\the\numexpr\j+1\relax);
    }
  }

  \foreach \i in {1,...,9} {
    \foreach \j in {0,...,9} {
      \draw[->, gray!40] (v\i\j) -- (v\the\numexpr\i-1\relax\j);
    }
  }

  \foreach \i in {1,...,9} {
    \foreach \j in {0,...,8} {
      \draw[->, gray!40] (v\i\j) -- (v\the\numexpr\i-1\relax\the\numexpr\j+1\relax);
    }
  }

  \foreach \j in {0,...,4} {
    \pgfmathsetmacro\x{(\j + 0.5)*\cellsize}
    \pgfmathsetmacro\y{8.5*\cellsize}
    \node at (\x,\y) {$-$};
  }
  \foreach \j in {5,...,8} {
    \pgfmathsetmacro\x{(\j + 0.5)*\cellsize}
    \pgfmathsetmacro\y{8.5*\cellsize}
    \node at (\x,\y) {$+$};
  }

  \foreach \j in {0,...,4} {
    \pgfmathsetmacro\x{(\j + 0.5)*\cellsize}
    \pgfmathsetmacro\y{7.5*\cellsize}
    \node at (\x,\y) {$-$};
  }
  \foreach \j in {5,...,8} {
    \pgfmathsetmacro\x{(\j + 0.5)*\cellsize}
    \pgfmathsetmacro\y{7.5*\cellsize}
    \node at (\x,\y) {$+$};
  }

\foreach \j in {0,...,5} {
    \pgfmathsetmacro\x{(\j + 0.5)*\cellsize}
    \pgfmathsetmacro\y{6.5*\cellsize}
    \node at (\x,\y) {$-$};
  }
  \foreach \j in {6,...,8} {
    \pgfmathsetmacro\x{(\j + 0.5)*\cellsize}
    \pgfmathsetmacro\y{6.5*\cellsize}
    \node at (\x,\y) {$+$};
  }

  \foreach \j in {0,...,7} {
    \pgfmathsetmacro\x{(\j + 0.5)*\cellsize}
    \pgfmathsetmacro\y{5.5*\cellsize}
    \node at (\x,\y) {$-$};
  }
  \foreach \j in {7,...,8} {
    \pgfmathsetmacro\x{(\j + 0.5)*\cellsize}
    \pgfmathsetmacro\y{5.5*\cellsize}
    \node at (\x,\y) {$+$};
  }

    \foreach \j in {0,...,8} {
    \pgfmathsetmacro\x{(\j + 0.5)*\cellsize}
    \pgfmathsetmacro\y{4.5*\cellsize}
    \node at (\x,\y) {$-$};
  }

  \foreach \j in {0,...,1} {
    \pgfmathsetmacro\x{(\j + 0.5)*\cellsize}
    \pgfmathsetmacro\y{3.5*\cellsize}
    \node at (\x,\y) {$+$};
  }
  \foreach \j in {2,...,8} {
    \pgfmathsetmacro\x{(\j + 0.5)*\cellsize}
    \pgfmathsetmacro\y{3.5*\cellsize}
    \node at (\x,\y) {$-$};
  }

    \foreach \j in {0,...,1} {
    \pgfmathsetmacro\x{(\j + 0.5)*\cellsize}
    \pgfmathsetmacro\y{2.5*\cellsize}
    \node at (\x,\y) {$+$};
  }
  \foreach \j in {2,...,8} {
    \pgfmathsetmacro\x{(\j + 0.5)*\cellsize}
    \pgfmathsetmacro\y{2.5*\cellsize}
    \node at (\x,\y) {$-$};
  }

    \foreach \j in {0,...,3} {
    \pgfmathsetmacro\x{(\j + 0.5)*\cellsize}
    \pgfmathsetmacro\y{1.5*\cellsize}
    \node at (\x,\y) {$+$};
  }
  \foreach \j in {4,...,8} {
    \pgfmathsetmacro\x{(\j + 0.5)*\cellsize}
    \pgfmathsetmacro\y{1.5*\cellsize}
    \node at (\x,\y) {$-$};
  }

      \foreach \j in {0,...,3} {
    \pgfmathsetmacro\x{(\j + 0.5)*\cellsize}
    \pgfmathsetmacro\y{0.5*\cellsize}
    \node at (\x,\y) {$+$};
  }
  \foreach \j in {4,...,8} {
    \pgfmathsetmacro\x{(\j + 0.5)*\cellsize}
    \pgfmathsetmacro\y{0.5*\cellsize}
    \node at (\x,\y) {$-$};
  }

  \draw[->,ultra thick, blue] (4*\cellsize,9*\cellsize) to[bend left=30] (5*\cellsize,9*\cellsize);
  \draw[->,ultra thick, blue] (4*\cellsize,9*\cellsize) to[bend left=-30] (5*\cellsize,9*\cellsize);
  \draw[->,ultra thick, blue] (5*\cellsize,9*\cellsize) -- (5*\cellsize,8*\cellsize); 
  \draw[->,ultra thick, blue] (5*\cellsize,8*\cellsize) -- (5*\cellsize,7*\cellsize);
  \draw[->,ultra thick, blue] (5*\cellsize,7*\cellsize) -- (6*\cellsize,7*\cellsize);
  \draw[->,ultra thick, blue] (6*\cellsize,7*\cellsize) -- (6*\cellsize,6*\cellsize);
  \draw[->,ultra thick, blue] (6*\cellsize,6*\cellsize) -- (7*\cellsize,6*\cellsize);
  \draw[->,ultra thick, blue] (7*\cellsize,6*\cellsize) -- (7*\cellsize,5*\cellsize);
  \draw[->,ultra thick, blue] (7*\cellsize,5*\cellsize) -- (8*\cellsize,5*\cellsize);
  \draw[->,ultra thick, blue] (8*\cellsize,5*\cellsize) -- (9*\cellsize,5*\cellsize);
  \draw[->,ultra thick, blue] (0,5*\cellsize) -- (0,4*\cellsize);
  \draw[->,ultra thick, blue] (0,4*\cellsize) -- (\cellsize,4*\cellsize);
  \draw[->,ultra thick, blue] (\cellsize,4*\cellsize) -- (2*\cellsize,4*\cellsize);
  \draw[->,ultra thick, blue] (2*\cellsize,4*\cellsize) -- (2*\cellsize,3*\cellsize);
  \draw[->,ultra thick, blue] (2*\cellsize,3*\cellsize) -- (2*\cellsize,2*\cellsize);
  \draw[->,ultra thick, blue] (2*\cellsize,2*\cellsize) -- (3*\cellsize,2*\cellsize);
  \draw[->,ultra thick, blue] (3*\cellsize,2*\cellsize) -- (4*\cellsize,2*\cellsize);
  \draw[->,ultra thick, blue] (4*\cellsize,2*\cellsize) -- (4*\cellsize,\cellsize);
  \draw[->,ultra thick, blue] (4*\cellsize,\cellsize) -- (4*\cellsize,0);
  \draw[->,ultra thick, blue] (0,9*\cellsize) -- (\cellsize,9*\cellsize);
  \draw[->,ultra thick, blue] (\cellsize,9*\cellsize) -- (2*\cellsize,9*\cellsize);
  \draw[->,ultra thick, blue] (2*\cellsize,9*\cellsize) -- (3*\cellsize,9*\cellsize);
  \draw[->,ultra thick, blue] (3*\cellsize,9*\cellsize) -- (4*\cellsize,9*\cellsize);
    \draw[->,ultra thick, blue] (5*\cellsize,9*\cellsize) -- (6*\cellsize,9*\cellsize);
  \draw[->,ultra thick, blue] (6*\cellsize,9*\cellsize) -- (7*\cellsize,9*\cellsize);
  \draw[->,ultra thick, blue] (7*\cellsize,9*\cellsize) -- (8*\cellsize,9*\cellsize);
  \draw[->,ultra thick, blue] (8*\cellsize,9*\cellsize) -- (9*\cellsize,9*\cellsize);
\end{tikzpicture}
\caption{Illustration of the construction of the proof of Proposition~\ref{prop:diag}. Each row is formed by a word of length $9$. These words are ordered from the top to the bottom according to the $\preceq_b$ order.\label{fig:grid}}
\end{figure}

\bibliographystyle{amsplain}
\bibliography{bibliography}

\providecommand{\bysame}{\leavevmode\hbox to3em{\hrulefill}\thinspace}
\providecommand{\MR}{\relax\ifhmode\unskip\space\fi MR }
\providecommand{\MRhref}[2]{%
  \href{http://www.ams.org/mathscinet-getitem?mr=#1}{#2}
}
\providecommand{\href}[2]{#2}
\begin{thebibliography}{10}

\bibitem{aharoni2007independent}
Ron Aharoni, Eli Berger, and Ran Ziv, \emph{Independent systems of
  representatives in weighted graphs}, Combinatorica \textbf{27} (2007), no.~3,
  253--267.

\bibitem{barany1982generalization}
Imre B{\'a}r{\'a}ny, \emph{A generalization of {C}arath{\'e}odory's theorem},
  Discrete Mathematics \textbf{40} (1982), no.~2-3, 141--152.

\bibitem{bjorner2003nerves}
Anders Bj{\"o}rner, \emph{Nerves, fibers and homotopy groups}, Journal of
  Combinatorial Theory, Series A \textbf{102} (2003), no.~1, 88--93.

\bibitem{om_book}
Anders Bj\"{o}rner, Michel Las~Vergnas, Bernd Sturmfels, Neil White, and
  G\"{u}nter~M. Ziegler, \emph{Oriented matroids}, second ed., Encyclopedia of
  Mathematics and its Applications, vol.~46, Cambridge University Press,
  Cambridge, 1999. \MR{1744046}

\bibitem{de2019discrete}
Jes{\'u}s De~Loera, Xavier Goaoc, Fr{\'e}d{\'e}ric Meunier, and Nabil Mustafa,
  \emph{The discrete yet ubiquitous theorems of {C}arath{\'e}odory, {H}elly,
  {S}perner, {T}ucker, and {T}verberg}, Bulletin of the American Mathematical
  Society \textbf{56} (2019), no.~3, 415--511.

\bibitem{fukuda1982oriented}
Komei Fukuda, \emph{Oriented matroid programming}, Ph.D. thesis, University of
  Waterloo, Waterloo, Canada, 1982.

\bibitem{holmsen2016intersection}
Andreas~F. Holmsen, \emph{The intersection of a matroid and an oriented
  matroid}, Advances in Mathematics \textbf{290} (2016), 1--14.

\bibitem{topological_helly}
Gil Kalai and Roy Meshulam, \emph{A topological colorful {H}elly theorem}, Adv.
  Math. \textbf{191} (2005), no.~2, 305--311. \MR{2103215}

\bibitem{mandel1982topology}
Arnaldo Mandel, \emph{Topology of oriented matroids}, Ph.D. thesis, University
  of Waterloo, Waterloo, Ontario, Canada, 1982.

\bibitem{meshulam2001clique}
Roy Meshulam, \emph{The clique complex and hypergraph matching}, Combinatorica
  \textbf{21} (2001), no.~1, 89--94.

\bibitem{Meshulam_domination}
\bysame, \emph{Domination numbers and homology}, Journal of Combinatorial
  Theory, Series A \textbf{102} (2003), no.~2, 321--330 (English).

\bibitem{munkres_algebraic_topology}
James~R. Munkres, \emph{Elements of algebraic topology}, Addison-Wesley
  Publishing Company, Menlo Park, CA, 1984. \MR{755006}

\bibitem{quillen_lemma}
Daniel Quillen, \emph{Homotopy properties of the poset of nontrivial
  {$p$}-subgroups of a group}, Advances in Mathematics \textbf{28} (1978),
  no.~2, 101--128. \MR{493916}

\bibitem{sperner1928satz}
Emanuel Sperner, \emph{Ein {S}atz {\"u}ber {U}ntermengen einer endlichen
  {M}enge}, Mathematische Zeitschrift \textbf{27} (1928), no.~1, 544--548.

\end{thebibliography}

\end{document}